\newcommand{\red}{\color{darkred}}
\newcommand{\blue}{\color{darkblue}}
\newcommand{\levy}{L\'evy}
\newcommand{\clt}{central limit theorem}
\newcommand{\garch}{{\rm GARCH}$(1,1)$}
\newcommand{\sta}{St\u aric\u a}
\newcommand{\ex}{{\rm e}\,}
\newcommand{\asy}{asymptotic}
\newcommand{\ts}{time series}
\definecolor{darkblue}{rgb}{.1, 0.1,.8}
\definecolor{darkgreen}{rgb}{0,0.8,0.2}
\definecolor{darkred}{rgb}{.8, .1,.1}
\def\tag{\refstepcounter{equation}\leqno }
\newtheorem{lemma}{Lemma}[section]
\newtheorem{theorem}[lemma]{Theorem}
\newtheorem{proposition}[lemma]{Proposition}
\newtheorem{definition}[lemma]{Definition}
\newtheorem{corollary}[lemma]{Corollary}
\newtheorem{example}[lemma]{Example}
\newtheorem{exercise}[lemma]{Exercise}
\newtheorem{remark}[lemma]{Remark}
\newtheorem{fig}[lemma]{Figure}
\newtheorem{tab}[lemma]{Table}
\newcommand{\MC}{Markov chain}
\newcommand{\bfQ}{{\bf Q}}
\newcommand{\bfR}{{\bf R}}
\newcommand{\bth}{\begin{theorem}}
\newcommand{\ethe}{\end{theorem}}
\newcommand{\bre}{\begin{remark}\em }
\newcommand{\ere}{\end{remark}}
\newcommand{\ble}{\begin{lemma}}
\newcommand{\ele}{\end{lemma}}
\newcommand{\sre}{stochastic recurrence equation}
\newcommand{\pp}{point process}
\newcommand{\bde}{\begin{definition}}
\newcommand{\ede}{\end{definition}}
\newcommand{\bco}{\begin{corollary}}
\newcommand{\eco}{\end{corollary}}
\newcommand{\bpr}{\begin{proposition}}
\newcommand{\epr}{\end{proposition}}
\newcommand{\bexer}{\begin{exercise}}
\newcommand{\eexer}{\end{exercise}}
\newcommand{\bexam}{\begin{example}}
\newcommand{\eexam}{\end{example}}
\newcommand{\bfi}{\begin{fig}}
\newcommand{\efi}{\end{fig}}
\newcommand{\btab}{\begin{tab}}
\newcommand{\etab}{\end{tab}}
\newcommand{\lhs}{left-hand side}
\newcommand{\fidi}{finite-dimensional distribution}
\newcommand{\rv}{random variable}
\newcommand{\sign}{{\rm sign}}
\newcommand{\var}{{\rm var}}
\newcommand{\cov}{{\rm cov}}
\newcommand{\rhs}{right-hand side}
\newcommand{\beao}{\begin{eqnarray*}}
\newcommand{\eeao}{\end{eqnarray*}\noindent}
\newcommand{\beam}{\begin{eqnarray}}
\newcommand{\eeam}{\end{eqnarray}\noindent}
\newcommand{\beqq}{\begin{equation}}
\newcommand{\eeqq}{\end{equation}\noindent}
\newcommand{\bce}{\begin{center}}
\newcommand{\ece}{\end{center}}
\newcommand{\barr}{\begin{array}}
\newcommand{\earr}{\end{array}}
\newcommand{\stp}{\stackrel{P}{\rightarrow}}
\newcommand{\std}{\stackrel{d}{\rightarrow}}
\newcommand{\stas}{\stackrel{\rm a.s.}{\rightarrow}}
\newcommand{\stv}{\stackrel{v}{\rightarrow}}
\newcommand{\stw}{\stackrel{w}{\rightarrow}}
\newcommand{\eqd}{\stackrel{d}{=}}
\newcommand{\vague}{\stackrel{\lower0.2ex\hbox{$\scriptscriptstyle
                    \it{v} $}}{\rightarrow}}
\newcommand{\weak}{\stackrel{\lower0.2ex\hbox{$\scriptscriptstyle
                    \it{w} $}}{\rightarrow}}
\newcommand{\what}{\stackrel{\lower0.2ex\hbox{$\scriptscriptstyle
                    \it{\hat{w}} $}}{\rightarrow}}
\newcommand{\bdis}{\begin{displaymath}}
\newcommand{\edis}{\end{displaymath}\noindent}
\newcommand{\N}{\mathbb{N}}
\newcommand{\R}{\mathbb{R}}
\newcommand{\nto}{n\to\infty}
\newcommand{\kto}{k\to\infty}
\newcommand{\xto}{x\to\infty}
\newcommand{\ov}{\overline}
\newcommand{\wt}{\widetilde}
\newcommand{\vep}{\varepsilon}
\newcommand{\la}{\lambda}
\newcommand{\regvary}{regularly varying}
\newcommand{\regvar}{regular variation}
\newcommand{\bbr}{{\mathbb R}}
\newcommand{\bbz}{{\mathbb Z}}
\newcommand{\bbn}{{\mathbb N}}
\newcommand{\bbs}{{\mathbb S}}
\newcommand{\bfE}{{\mathbf E}}
\newcommand{\con}{convergence}
\newcommand{\st}{such that}
\newcommand{\fif}{if and only if}
\newcommand{\wrt}{with respect to}
\newcommand{\chf}{characteristic function}
\newcommand{\fct}{function}
\newcommand{\ds}{distribution}
\newcommand{\rep}{representation}
\newcommand{\cmt}{continuous mapping theorem}
\newcommand{\seq}{sequence}
\newcommand{\ms}{measure}
\newcommand{\ld}{large deviation}
\newcommand{\bfX}{{\bf X}}
\newcommand{\bfD}{{\bf D}}
\newcommand{\bfY}{{\bf Y}}
\newcommand{\bfy}{{\bf y}}
\newcommand{\bfA}{{\bf A}}
\def\1{\ensuremath{\mathrm{1}\hspace{-.35em} \mathrm{1}}} 
\def\B{\mathbb{B}}
\def\E{{\mathbb E}}
\def\L{\mathbb{L}}
\def\N{\mathbb{N}}
\def\P{{\mathbb{P}}}
\def\R{\mathbb{R}}
\def\Z{\mathbb{Z}}
\def\Lip{\mathop{\rm Lip}\nolimits}
\renewcommand{\le}{\ensuremath{\leqslant}}
\renewcommand{\ge}{\ensuremath{\geqslant}}
\newcommand{\introo}[2]{{\left]{#1,\,#2\,}\right[\kern1pt}}
\newcommand{\intrfo}[2]{{\left[{#1,\,#2}\right[\kern1pt}}
\begin{document}

\title[The cluster index]{The cluster index of regularly varying sequences
with applications to limit theory for functions of multivariate Markov chains}

\thanks{Thomas Mikosch's research is partly supported
by the Danish Research Council (FNU) Grants 272-06-0442 and
09-072331. 
Both authors would like to thank their home institutions for
hospitality when visiting each other.}
\author[T. Mikosch]{Thomas Mikosch}
\author[O. Wintenberger]{Olivier Wintenberger}
\address{Thomas Mikosch, University of Copenhagen, Department of Mathematics,
Universitetsparken 5,
DK-2100 Copenhagen\\ Denmark} \email{mikosch@math.ku.dk}

\address{Olivier Wintenberger,
            Universit\'e de Paris-Dauphine and CREST-LFA,
            Centre De Recherche en Math\'ematiques de la D\'ecision
            UMR CNRS 7534,
            Place du Mar\'echal De Lattre De Tassigny,
            75775 Paris Cedex 16, France}
            \email{owintenb@ceremade.dauphine.fr}

\maketitle

\begin{abstract}{  We introduce the {\em cluster index}
of a multivariate \regvary\ stationary \seq\ and characterize the
index in terms of the spectral tail process. This index plays a major
role in limit theory for partial sums of \regvary\ \seq s. We illustrate the use of the 
cluster index by characterizing infinite variance stable
limit \ds s and precise \ld\ results for sums of  multivariate \fct s 
acting on a stationary \MC\ under a drift condition.}
\end{abstract}

{\em AMS 2000 subject classifications:} Primary 60J05; Secondary
60F10, 60F05, 60G70

 {\em Keywords and phrases:} Markov processes,  regular variation,  central limit theorem, large deviation principle, GARCH.

\section{Introduction}
Consider  a stationary \MC\ $(\Phi_t)$ and a \fct\
$f$ acting on the state space of the \MC\ and mapping into $\R^d$ for some
$d\ge 1$. For the resulting stationary process $X_t=f(\Phi_t)$, $t\in\bbz$,
the corresponding partial sum process is given by
\beao
S_0=0\,,\quad S_n= X_1+\cdots +X_n\,.
\eeao
We also assume that the \fidi s of the process $(X_t)$ are \regvary\
with index $\alpha$; see Section~\ref{subsec:regvar} for a definition.
Roughly speaking, this condition ensures that the tails of the \fidi s
have power law behavior, hence sufficiently high moments of $X$ are infinite.
(Here and in what follows, we write $Y$ for a generic element of any 
stationary \seq\ $(Y_t)$.) Regular variation of a random vector and,
more generally, of a stationary \seq\ is a condition which determines
the extremal dependence structure in a flexible way.
\par
For an iid \seq\ the condition of \regvar\ of $X$ with index
$\alpha\in (0,2)$ is necessary and sufficient for the \clt\
\beao
a_n^{-1} (S_n-b_n)\std \xi_\alpha\,,\quad \nto ,
\eeao
where $a_n>0$, $b_n\in\bbr$, $n\in\bbn$, are suitable constants and
$\xi_\alpha$ has an $\alpha$-stable \ds\ in $\bbr^d$; see
\cite{rvaceva:1962} for the limit theorem and
\cite{samorodnitsky:taqqu:1994}
for a description of infinite variance stable laws in $\bbr^d$.
Limit theory with $\alpha$-stable limits for dependent \seq s
was studied in  \cite{jakubowski:1993,jakubowski:1997} by using 
the \con\ of \chf s and in \cite{davis:hsing:1995} by using the \cmt\
acting on suitable weakly converging \pp es; see also 
\cite{basrak:krizmanic:segers:2011} for a \fct al \clt\ using the same
technique.
These results were
proved for univariate \seq s, but \cite{davis:mikosch:1998} proved
that the \pp\ \con\ results remain valid in the multivariate case by
a slight modification of the proofs in  \cite{davis:hsing:1995}.
\par
Using ideas from \cite{jakubowski:1993,jakubowski:1997}, the authors of
\cite{bartkiewicz:jakubowski:mikosch:wintenberger:2011} studied 
stable limit theory for general univariate \regvary\ \seq s; see
Theorem~\ref{thm:clt} below. We use this result and the Cram\'er-Wold
device to derive the corresponding limits for all linear combinations
$\theta'S_n$, $\theta\in \bbs^{d-1}$, where  $\bbs^{d-1}$ is the unit
sphere in $\bbr^d$ \wrt\ the Euclidean norm. According to
Theorem~\ref{thm:clt}, the $\alpha$-stable limit laws of $\theta'S_n$
(with suitable normalization and centering) are characterized by
the \fct\
\beam\label{eq:oo}
b(\theta)=
\lim_{k\to\infty}\lim_{\xto}
\dfrac{\P(\theta'S_k>x)-\P(\theta'S_{k-1}>x)}{\P(|X|>x)}  \,,\quad \theta\in
\bbs^{d-1}\,.
\eeam
We discuss the so-called {\em cluster index} $b$ 
in Section~\ref{subsec:cluster}. The existence of the limits
in \eqref{eq:oo} is guaranteed under the conditions of this paper; see 
Theorem~\ref{lem:sum}. Moreover, the cluster index $b$
determines the $\alpha$-stable limit laws in the multivariate case;
see Theorem~\ref{main}. In a way, the \fct\ $b$ plays a similar 
role as the notion of extremal index in limit theory for maxima of
dependent \seq s; see \cite{leadbetter:lindgren:rootzen:1983} for this
notion.
\par
Regular variation is also the key to {\em precise \ld\ theory} for the
sums $S_n$. In the univariate iid case, classical work by 
A.V. and S.V. Nagaev \cite{nagaev:1969,nagaev:1979} shows that
relations of the following type hold
\beao
\sup_{x\ge b_n}\Big|\dfrac{\P(S_n>x)}{n\,\P(|X|>x)}- p\Big|\to 0\,,
\eeao
where $p=\lim_{\xto}\P(X>x)/\P(|X|>x)=p$ and $(b_n)$ is a suitably
chosen \seq\ \st\ $b_n\to\infty$ and { $S_n/b_n\stp 0$ as $\nto$.} Related work for dependent \regvary\
\seq s was proved in \cite{mikosch:samorodnitsky:2000} for linear 
processes, 
in \cite{buraczewski:damek:mikosch:zienkiewicz:2011, konstantinides:mikosch:2004} for solutions to
\sre s
and for general \regvary\ \seq s in \cite{mikosch:wintenberger:2011};
for earlier results see also 
\cite{jakubowski:1993,jakubowski:1997,davis:hsing:1995}.   
The results in these papers are all of the type 
\beam\label{eq:mmm}
\sup_{x\in (b_n,c_n)}\Big|\dfrac{\P(S_n>x)}{n\,\P(|X|>x)}- b(1)\Big|\to 0\,,
\eeam
where $b(1)$ is the limit in \eqref{eq:oo} for $d=1$ and $(b_n,c_n)$
are suitable regions tending to infinity. 
\par
The case of iid multivariate \regvary\ $(X_t)$ was treated in 
\cite{hult:lindskog:mikosch:samorodnitsky:2005}, including a
corresponding \fct al \ld\ result.   In this paper, we get a 
corresponding \ld\ principle  for \regvary\  multivariate \fct s acting on a 
\MC\   (see 
Theorem~\ref{thm:ldp}): 
\beam\label{eq:gg}
\dfrac{\P (\lambda_n^{-1}S_n\in\cdot)}{n\,\P(|X|>\lambda_n)}\stv 
\nu_\alpha\,.
\eeam
Here $\stv$ denotes vague \con\ on some Borel $\sigma$-field,
$\la_n\to \infty$ is a suitable normalizing \seq<  
and the limit $\nu_\alpha$ is a \ms\  which is
induced by the \regvar\ of the sums $S_k$, $k\ge 1$. 
As for the case of stable limits, we
start by proving the \ld\ principle for linear 
combinations $\theta' S_n$, exploiting the corresponding result
\eqref{eq:mmm} with $b(1)$ replaced by $b(\theta)$ from \eqref{eq:oo}; 
see Theorem~\ref{ldMW}. This corresponds to  
\eqref{eq:gg} restricted to half-planes not containing the origin.  
It is in general not possible to extend the limit relation 
\eqref{eq:gg} from half-spaces to general Borel sets. This 
extension is however possible 
by assuming some additional conditions such as $\alpha$ is non-integer.
As a matter of fact, relation \eqref{eq:gg} cannot be written as a
uniform result in the spirit of \eqref{eq:mmm}, due to its
multivariate character.
\par 
The paper is  organized as follows. In Section~\ref{sec:prel}
we introduce \regvar\ of a stationary \seq\ and the drift condition
of a \MC . In Section~\ref{subsec:cluster} we define 
the {\em cluster index} $b(\theta)$, $\theta\in \bbs^{d-1}$, of a
\regvary\ stationary \seq . We prove the existence of the cluster
index for multivariate \fct s acting on a \MC\ under a drift
condition (Theorem~\ref{lem:sum}).
In Section~\ref{sec:stab} we formulate the main results of this paper.
They include $\alpha$-stable limit theory (Theorem~\ref{main}) 
and precise \ld\ principles (Theorem~\ref{sec:ldp}) 
for \fct s of regenerative \MC s. In Section~\ref{sec:ex}
we calculate the cluster index for
several important \ts\ models, including multivariate autoregressive
processes, solutions to \sre s, \garch\ processes and their sample
covariance \fct s.
In the remaining sections we prove the results of  Section~\ref{sec:stab}.

\section{Preliminaries}\label{sec:prel}\setcounter{equation}{0}
\subsection{Regular variation of vectors and \seq s of random  vectors}\label{subsec:regvar}
In what follows, we will use the notion of \regvar\ as a suitable
way of describing heavy tails of random vectors and \seq s of random vectors.
We commence with a random vector $X$ with values in $\R^d$ for some $d\ge 1$.
We say that this vector (and its \ds ) are {\em \regvary\ with index
$\alpha >0$} if the following relation holds as $\xto$:
\begin{equation}\label{eq:rv}
\frac{\P(|X| > ux, X/| X| \in\cdot )}{\P(|X| > x)}\stw
u^{-\alpha}\, \P(\Theta\in\cdot),\quad u>0\,.
\end{equation}
Here $\stw$ denotes weak \con\ of finite \ms s  and $\Theta$ is a vector with values in the unit sphere
$\bbs^{d-1} = \{x\in\bbr^d : |x| = 1\}$ of   $\R^d$. Its \ds\ is the {\em spectral \ms }
of \regvar\ and depends on the choice of the norm. However, the definition of \regvar\ does not
depend on any concrete norm; we always refer to the Euclidean norm. An equivalent way to define
\regvar\ of $X$ is to require that there exists a non-null Radon \ms\ $\mu$ on the
Borel $\sigma$-field of $\ov \bbr_0^d=
\ov \bbr^d\setminus\{\bf0\}$ \st\
\beam\label{eq:1}
n\, \P(a_n^{-1} X\in \cdot )\stv \mu_X(\cdot)\,,
\eeam
where the \seq\ $(a_n)$ can be chosen \st\ $n\,\P(|X|>a_n)\sim 1$ and $\stv$ refers to
vague \con . The limit \ms\ $\mu_X$ necessarily has the property 
$\mu_X(u\cdot)= u^{-\alpha}\mu_X(\cdot)\,,u>0$, 
which explains the relation with the index $\alpha$.
We refer to
\cite{bingham:goldie:teugels:1987} for an encyclopedic treatment of one-dimensional \regvar\ and
\cite{resnick:1987,resnick:2007} for the multivariate case.
\par
Next consider a strictly stationary \seq\ 
$(X_t)_{t\in\Z}$ of $\bbr^d$-valued random vectors with a generic
element $X$.
It is {\em \regvary\ with index $\alpha>0$} if every lagged vector
$(X_1, . . . ,X_k)$, $k\ge 1$, is \regvary\ in the sense of \eqref{eq:rv}; see
\cite{davis:hsing:1995}. An equivalent description of a \regvary\ \seq\ $(X_t)$ is achieved by
exploiting \eqref{eq:1}: for every $k\ge 1$, there exists a non-null Radon \ms\  $\mu_k$ on
 the Borel $\sigma$-field of $ \ov \bbr_0^{dk}$ \st\
\beam\label{eq:1a}
n\, \P(a_n^{-1} (X_1,\ldots,X_k)\in \cdot )\stv \mu_k\,,
\eeam
where $(a_n)$ is chosen \st\ $n\,\P(|X_0|>a_n)\sim 1$.
\par
A convenient characterization of a \regvary\ \seq\  $(X_t)$ was given in
Theorem 2.1 of \cite{basrak:segers:2009}: there exists a \seq\ of $\bbr^d$-valued random vectors
$(Y_t)_{t\in\bbz}$ \st\ $\P(|Y_0| > y) = y^{-\alpha}$ for $y > 1$ and
for $k\ge 0$,
\beao
\P (x^{-1}(X_{-k},\ldots,X_k)\in\cdot \mid |X_0| > x)\stw
\P((Y_{-k},\ldots,Y_k)\in \cdot)\,,\quad \xto\,.
\eeao
The process
$(Y_t)$ is the {\em tail process} of $(X_t)$. Writing $\Theta_t = Y_t/|Y_0|$ for $t\in \Z$,
one also has for $k\ge 0$,
\beam\label{eq:sepctraltailprocess}
\P( |X_0|^{-1}(X_{-k},\ldots,X_k)\in\cdot \mid |X_0| > x)
\stw \P ((\Theta_{-k},\ldots,\Theta_k)\in \cdot)\,,\quad \xto\,.
\eeam
 We  will identify 
$|Y_0|\,(Y_t/|Y_0|)_{|t|\le k}= |Y_0|\,(\Theta_t)_{|t|\le k}$, $k\ge
0$. Then $|Y_0|$ is independent of $(\Theta_t)_{|t|\le k}$ for every
$k\ge 0$. We refer
to $(\Theta_t)_{t\in\bbz}$ as 
the {\em spectral tail process} of $(X_t)$.
\par
We formulate our
main condition on the tails of the \seq\ $(X_t)$: \\[1mm]
{\em Condition} ${\bf (RV_\alpha)}$: The strictly stationary \seq\ $(X_t)$ is
\regvary\ with index $\alpha>0$ and spectral tail process $(\Theta_t)$.

\subsection{The drift condition}\label{subsec:drift}
Assume that the following {\em drift condition} holds for the \MC\ $(\Phi_t)$ for suitable
$p>0$  and an $\bbr^d$-valued  \fct\ $f$ acting on the state space of the \MC :\\[1mm]
{\em Condition}  ${\bf (DC_{\it p})}$:
There exist constants $\beta\in  (0,1)$, $b>0$, and a function $V:\R^d\to(0,\infty)$ such that $c_1|x|^{p}\le V(x)\le c_2|x|^p$, $c_1,c_2>0$, satisfying for any $y$ in the state space of the \MC ,
\beao
\E( V(f(\Phi_1))\mid \Phi_0=y)\le \beta \,V(f(y))+b.
\eeao
We mention that Jensen's inequality ensures that ${\bf (DC_{\it p})}$ implies 
${\bf (DC_{\it p'})}$ for $p'<p$. 
We exploited  condition   ${\bf (DC_{\it p})}$ in
\cite{mikosch:wintenberger:2011}, where we proved \ld\ principles for
\regvary\ strictly stationary \seq s of \rv s, in particular for
irreducible \MC s.
\par
If $(\Phi_t)$ is an irreducible \MC\ then ${\bf (DC_{\it p})}$ for any $p>0$ implies $\beta$-mixing
with geometric rate; see \cite{meyn:tweedie:1993}, p. 371.
Moreover, without loss of generality, by considering the Nummelin splitting scheme, see \cite{nummelin:1984} for details,  we will assume that  $(\Phi_t)$ 
possesses an atom $A$.
The notions of drift, small set, atom, etc. used throughout  are borrowed from
\cite{meyn:tweedie:1993}.   
In what follows, we write $\P_A(\cdot)= \P(\cdot \mid \Phi_0\in A)$ 
and $\E_A$ for the corresponding expectation.
 
\par
We always assume the existence of 
some $M>0$ such that $\{x\,:\, V(f(x))\le M\}$  is a small set 
(this is true in all our examples). Then the condition ${\bf (DC_{\it p})}$ is equivalent to the existence of  
constants $\beta\in  (0,1)$ and $b>0$ such that for any $y$, 
\beao
\E( V(f(\Phi_1)\mid \Phi_0=y)\le \beta \,V(f(y))+b\1_A(y)\,.
\eeao
\par
Direct verification of the condition ${\bf (DC_{\it p})}$ is in general
difficult. We will use the following result which can often be checked
much easier.
\ble\label{lem:skel}
Assume that the stationary \MC\ $(\Phi_{t})$ is aperiodic,  irreducible and 
satisfies the following
condition for some $p>0$ and integer $m\ge 1$ : \\[1mm]
{\em Condition}  ${\bf (DC_{\it p,m})}$: {\rm (a)} There exist $b>0$
and $\beta\in (0,1)$ \st\
for any $y$ in the state space of the \MC ,
\beao
\E( V(f(\Phi_m))\mid \Phi_0=y)\le \beta \,V(f(y))+b\1_A(y)\,,
\eeao 
where $V$ is the \fct\ from ${\bf (DC_{\it p})}$.\\[1mm]
{\rm (b)}
There exist $c_1,c_2>0$  such that for any $y$ in the state space of the \MC 
\beao 
\E( V(f(\Phi_1)\mid \Phi_0=y)\le c_1V(f(y))+c_2\,.
\eeao 
Then condition  ${\bf (DC_{\it p})}$ holds.
\ele
\begin{proof} 
Theorem 15.3.3 in  \cite{meyn:tweedie:1993}
says that the drift condition in part (a) of ${\bf (DC_{\it p,m})}$ 
implies $V$-geometric regularity of the $m$-skeleton \MC\
$(\Phi_{tm})$. 
Theorem 15.3.6 in  \cite{meyn:tweedie:1993} yields 
the equivalence between $V$-geometric regularity and 
$g$-geometric regularity of the original \MC\ for a  
function $g$ satisfying 
$\sum_{t=1}^m\E(g(\Phi_t)\mid \Phi_0=y)=V(f(y))$. Thus the 
drift condition is satisfied for the original \MC\ and some 
finite Lyapunov function $V'\ge g$. Making multiple use of part (b) of 
${\bf (DC_{\it p,m})}$, we can show that there exist 
constants $c_1',c_2'>0$ satisfying $\sum_{t=1}^m\E(g(\Phi_t)\mid \Phi_0=y)\le c_1' V(f(y))+c_2'$. Thus {\bf (DC$_{\it p}$)} 
follows for a \fct\ $V'(x)=c_1''V(x)+c_2''$ and suitable constants
$c_1'',c_2''>0$. 
\end{proof}
Consider the \seq\ of the hitting times of the atom $A$ by
 the \MC\ $(\Phi_t)$, i.e. $\tau_A(1)=\tau_A=\min\{k>0: \Phi_k\in A\}$ and
$\tau_A(j+1)=\min\{k>\tau_A(j): \Phi_k\in A\}$, $j\ge 1$.
We will write 
\beam\label{eq:partial}
S(0)=\sum_{t=1}^{\tau_A}X_t \quad\mbox{and}\quad S(i)=\sum_{t=\tau_A(i)+1}^{\tau_A(i+1)}X_t\,,\quad i\ge 1\,.
\eeam
 According to the theory in \cite{meyn:tweedie:1993},
$(\tau_A(i)-\tau_A(i-1))_{i\ge 2}$ and $(S(i))_{i\ge 1}$ constitute
iid \seq s; we will refer to regenerative \MC s.
The drift condition  {\bf (DC$_{\it p}$)} is tailored 
for proving the existence of moments of $S(1)$
under the existence of moments of $X_t=f(\Phi_t)$ of the same order. 
\par
The drift condition {\bf (DC$_{\it p}$)} is 
useful for proving central limit theory and
other \asy\ results for \fct s of \MC s. As a benchmark result
we quote a \clt\ which is a simple corollary of Proposition 2.1 in
Samur \cite{samur:2004}. To apply this result notice that
${\bf (DC_{\rm 1})}$ implies condition (D$_2$) of \cite{samur:2004}
for $|X_t|$ with $V=c\,|f|$ with $c>0$ sufficiently small.
\bth\label{thm:samur} Assume that the stationary \MC\ $(\Phi_t)$ is 
aperiodic, irreducible and $(X_t)=(f(\Phi_t))$
satisfies ${\bf (DC_{\rm 1})}$, $\E |X|^2<\infty$ and $\E X=0$. 
Then the following statements hold:
\begin{enumerate}
\item
 The partial sum $S(1)$ has finite second moment. 
\item The central limit theorem $n^{-0.5}S_n\std \mathcal N(0,\Sigma)$
  holds with \beao
\Sigma&=& \E_A[S(1)S(1)']\\
&=&  \lim_{k\to \infty} \E\Big[\Big(\sum_{t= 0}^kX_t\Big)\Big(\sum_{t=0}
  ^kX_t\Big)'-\Big(\sum_{t= 1}^k X_t\Big)\Big(\sum_{t=1}^k
X_t\Big)'\Big]\,.
\eeao
\end{enumerate}
\ethe
Together with Theorem~\ref{main} that deals
with the case of infinite variance stable limits, Theorem~\ref{thm:samur} complements 
the limit theory for partial sums of \fct s of \MC s in the case of
finite variance summands and Gaussian limits.

\section{The cluster index }\label{subsec:cluster}\setcounter{equation}{0}
We commence by considering a general $\bbr^d$-valued 
stationary process $(X_t)$ satisfying  ${\bf
  (RV_\alpha)}$  for some $\alpha>0$.
A continuous mapping argument for \regvar\ (see e.g.
\cite{hult:lindskog:2005,hult:lindskog:2006}) and \eqref{eq:1a}
ensure  the existence of the limits
\beao
b_k(\theta) =\lim_{\nto}n\,\P(\theta'S_k >a_n)\,,\quad k\ge 1,\quad  \theta\in \bbs^{d-1}.
\eeao
The difference $b_{k+1}(\theta)-b_k(\theta)$ can be expressed in terms
of the spectral tail process $(\Theta_t)$ of $(X_t)$.
\ble
Let $(X_t)$ be an
$\bbr^d$-valued 
stationary process satisfying  ${\bf (RV_\alpha)}$  
for some $\alpha>0$.
Then, for any $k\ge 1$, 
\beao
b_{k+1}(\theta)-b_k(\theta)=\E\Big[\Big(\theta'\sum_{t=0}^k\Theta_t\Big)_+^\alpha -\Big(\theta'\sum_{t=1}^k\Theta_t\Big)_+^\alpha \Big].\label{eq:?}
\eeao
\ele
\begin{proof}
We start by observing that each $b_k(\theta)$ can be expressed
in terms of the spectral tail process $(\Theta_t)$. Indeed,
${\bf (RV_\alpha)}$ 
yields for every $k\ge 1$ and
$\theta\in\bbs^{d-1}$ that
\beao
b_k(\theta)&=&\lim_{x\to\infty}\frac{\P(\theta'S_k>x)}{\P(|X|>x)}\\&=&\lim_{\xto}\frac{\P(\cup_{j=1}^k\{\theta'S_k>x,\theta'X_j>x/k\}\cap\{\theta'X_i<x/k,1\le i<j\})}{\P(|X|>x)}\\
&=&
\lim_{\xto}\sum_{j=1}^k \Big[\frac{\P( \theta'S_k>x,\theta'X_j>x/k)}{\P(|X|>x)}-\frac{\P( \theta'S_k>x,\theta'X_j>x/k,\max_{1\le i<j}\theta'X_i>x/k)}{\P(|X|>x)}\Big].
\eeao
By stationarity, the summands in the above expression can be written in
the form
\beao
\frac{\P(|X_0|>x/k)}{\P(|X_0|>x)}\Big[\P\Big(\theta'\sum_{t=1-j}^{k-j}X_t>x,\theta'X_0>x/k \mid
|X_0|>x/k\Big)&&\\
&&\hspace{-6cm}-\P\Big(\theta'\sum_{t=1-j}^{k-j}X_t>x,\theta'X_0>x/k,\max_{1-j\le i<0}\theta'X_i>x/k \mid
|X_0|>x/k\Big)\Big].
\eeao
Here we used the fact that $\{\theta'X_0>x/k\}\subset \{|X_0|>x/k\}$.
Letting $\xto$ in the above expressions, applying the conditional
limits \eqref{eq:sepctraltailprocess} and observing that
$\P(|Y_0|>y)=y^{-\alpha}$, $y>1$,
we obtain the limiting expressions
\beao
\lefteqn{k^\alpha\Big[\P\Big(|Y_0|\theta'\sum_{t=1-j}^{k-j}\Theta_t>k,
|Y_0|\theta'\Theta_{0}>1 \Big)}\\&&-\P\Big(|Y_0|\theta'\sum_{t=1-j}^{k-j}\Theta_t>1,|Y_0|\theta'\Theta_0>1,|Y_0|\max_{1-j\le i<0}\theta'\Theta_i>1\Big)\Big]\\
&=&\E\Big[\Big(\theta'\sum_{t=1-j}^{k-j}\Theta_t\Big)_+^\alpha\wedge
 (k\theta'\Theta_{0})_+^\alpha\Big]-\E\Big[\Big(\theta'\sum_{t=1-j}^{k-j}\Theta_t\Big)_+^\alpha\wedge
 (k\theta'\Theta_{0})_+^\alpha\wedge\max_{1-j\le i<0}(k\theta'\Theta_i )_+^\alpha\Big].
\eeao
Hence $b_k(\theta)$ has \rep
\beao
b_k(\theta)&=&\sum_{j=1}^k \E\Big[\Big(\Big(\theta'\sum_{t=1-j}^{k-j}\Theta_t\Big)_+^\alpha-\max_{1-j\le i<0}(k\theta'\Theta_i )_+^\alpha\Big)_+\wedge
\Big((k\theta'\Theta_{0})_+^\alpha-\max_{1-j\le i<0}(k\theta'\Theta_i )_+^\alpha\Big)_+\Big],
\eeao
and therefore
\beao
\lefteqn{b_{k+1}(\theta)-b_k(\theta)}\\
&=&\E\Big[\Big(\theta'\sum_{t=0}^{k}\Theta_t\Big)_+^\alpha\wedge
 (k\theta'\Theta_{0})_+^\alpha\Big]\\
&&+\sum_{j=1}^k \E\Big[\Big(\Big(\theta'\sum_{t=-j}^{k-j}\Theta_t\Big)_+^\alpha-\max_{-j\le i<0}(k\theta'\Theta_i )_+^\alpha\Big)_+\wedge
\Big((k\theta'\Theta_{0})_+^\alpha-\max_{-j\le i<0}(k\theta'\Theta_i )_+^\alpha\Big)_+\\
&&\hspace{1.2cm}-\Big(\Big(\theta'\sum_{t=1-j}^{k-j}\Theta_t\Big)_+^\alpha-\max_{1-j\le i<0}(k\theta'\Theta_i )_+^\alpha\Big)_+\wedge
\Big((k\theta'\Theta_{0})_+^\alpha-\max_{1-j\le i<0}(k\theta'\Theta_i )_+^\alpha\Big)_+\Big].
\eeao
The expectations in the sum are of the type
$
\E f(\Theta_{-s},\ldots,\Theta_t)$
for integrable $f$ \st\ \\$f(x_{-s},\ldots,x_t)=0$ if
$x_{-s}=0$, $s,t\ge 0$. Then, according to
Theorem 3.1 (iii) in \cite{basrak:segers:2009},
\beao
\E f(\Theta_{-s},\ldots,\Theta_t)= E\Big( f(\Theta_0/|\Theta_s|,\ldots,
\Theta_{t+s}/|\Theta_s|)\,|\Theta_s|^\alpha
\Big) \,,\quad s,t\ge 0\,.
\eeao
Application of this formula and the fact that our \fct s $f$
are homogeneous of order $\alpha$
yield
\beao
b_{k+1}(\theta)-b_k(\theta)
&=&\E\Big[\Big(\theta'\sum_{t=0}^{k}\Theta_t\Big)_+^\alpha\wedge
 (k\theta'\Theta_{0})_+^\alpha\Big]\nonumber\\
&&+\sum_{j=1}^k \E\Big[\Big(\Big(\theta'\sum_{t=0}^{k}\Theta_t\Big)_+^\alpha-\max_{0\le i<j}(k\theta'\Theta_i )_+^\alpha\Big)_+\wedge
\Big((k\theta'\Theta_{j})_+^\alpha-\max_{0\le i<j}(k\theta'\Theta_i
)_+^\alpha\Big)_+\nonumber\\
\eeao
\beao
&&\hspace{1.2cm}-\Big(\Big(\theta'\sum_{t=1}^{k}\Theta_t\Big)_+^\alpha-\max_{1\le i<j}(k\theta'\Theta_i )_+^\alpha\Big)_+\wedge
\Big((k\theta'\Theta_{j})_+^\alpha-\max_{1\le i<j}(k\theta'\Theta_i
)_+^\alpha\Big)_+\Big]\nonumber\\
&=& \E\Big[\sum_{j=0}^k\Big(\Big(\theta'\sum_{t=0}^{k}\Theta_t\Big)_+^\alpha-\max_{0\le i<j}(k\theta'\Theta_i )_+^\alpha\Big)_+\wedge
\Big((k\theta'\Theta_{j})_+^\alpha-\max_{0\le i<j}(k\theta'\Theta_i )_+^\alpha\Big)_+\nonumber\\
&&-\sum_{j=1}^k\Big(\Big(\theta'\sum_{t=1}^{k}\Theta_t\Big)_+^\alpha-\max_{1\le i<j}(k\theta'\Theta_i )_+^\alpha\Big)_+\wedge
\Big((k\theta'\Theta_{j})_+^\alpha-\max_{1\le i<j}(k\theta'\Theta_i
)_+^\alpha\Big)_+\Big]\nonumber\\
&=&\E\Big[\Big(\theta'\sum_{t=0}^k\Theta_t\Big)_+^\alpha -\Big(\theta'\sum_{t=1}^k\Theta_t\Big)_+^\alpha \Big].\nonumber
\eeao
The last identity follows because there exists $\ell=\min\{1\le j\le n;\,(k\theta'\Theta_{j})_+^\alpha \ge \Big(\theta'\sum_{t=1}^{k}\Theta_t\Big)_+^\alpha\}$ such that 
\beao&&\Big(\Big(\theta'\sum_{t=1}^{k}\Theta_t\Big)_+^\alpha-\max_{1\le
  i<j}(k\theta'\Theta_i )_+^\alpha\Big)_+=0\quad\mbox{for all
  $j>\ell,$}
\eeao
and then also 
\beao
&&\Big(\Big(\theta'\sum_{t=1}^{k}\Theta_t\Big)_+^\alpha-\max_{1\le i<\ell}(k\theta'\Theta_i )_+^\alpha\Big)_+\wedge
\Big((k\theta'\Theta_{\ell})_+^\alpha-\max_{1\le i<\ell}(k\theta'\Theta_i )_+^\alpha\Big)_+\\&=&
\Big(\theta'\sum_{t=1}^{k}\Theta_t\Big)_+^\alpha-\max_{1\le
  i<\ell}(k\theta'\Theta_i )_+^\alpha\,,
\eeao
and
\beao
&&\sum_{j=1}^{\ell-1}\Big(\Big(\theta'\sum_{t=1}^{k}\Theta_t\Big)_+^\alpha-\max_{1\le i<j}(k\theta'\Theta_i )_+^\alpha\Big)_+\wedge
\Big((k\theta'\Theta_{j})_+^\alpha-\max_{1\le i<j}(k\theta'\Theta_i )_+^\alpha\Big)_+\\
&=&\sum_{j=1}^{\ell-1}\Big((k\theta'\Theta_{j})_+^\alpha-\max_{1\le i<j}(k\theta'\Theta_i )_+^\alpha\Big)_+\\&=&\sum_{j=1}^{\ell-1} \max_{1\le i\le j}(k\theta'\Theta_i )_+^\alpha-\max_{1\le i<j}(k\theta'\Theta_i )_+^\alpha\\ &=&\max_{1\le i<\ell}(k\theta'\Theta_i )_+^\alpha.
\eeao
\end{proof}
The remainder of this paper crucially depends on the notion of 
{\em cluster index} of the \regvary\ \seq\ $(X_t)$, given as the
limiting \fct :
\beao
b(\theta)=\lim_{\kto} (b_{k+1}(\theta)-b_k(\theta))\,, \quad
\theta\in\bbs^{d-1}\,.
\eeao
  In contrast to the quantities $b_k(\theta)$ the
existence of the limits $b(\theta)$ is not straightforward. 
The following result  yields a sufficient condition for the existence
of $b$.
\bth\label{lem:sum}
Assume that $(X_t)$ satisfies ${\bf (RV_\alpha)}$  
for some $\alpha >0$   
and that $X_t=f(\Phi_t)$, $t\in\bbz$, where $f$ is an
$\bbr^d$-valued \fct\ acting on the \MC\ $(\Phi_t)$ satisfying 
${\bf (DC_{\it p})}$ for some positive $p\in (\alpha-1,\alpha)$.
Then the limits
\beao
b(\theta)&=&\E\Big[\Big(\sum_{t\ge 0}\theta'\Theta_t\Big)^\alpha_+-\Big(\sum_{t\ge 1}\theta'\Theta_t\Big)^\alpha_+\Big]
\,,\quad
\theta\in \bbs^{d-1}\,,
\eeao
exist and are finite. \ethe
\bre 
The cluster index $b$ of $(X_t)$ is a continuous
\fct\ on $\bbs^{d-1}$. This is shown in the proof below: $b$ is the
uniform limit of continuous \fct s on $\bbs^{d-1}$. The index 
$b(\theta)$ is non-negative since it coincides with
the C\`esaro mean $\lim_{\kto} k^{-1} b_k(\theta)$. 
For $0<\alpha\le 1$, the sub-additivity of the function $x\to
x_+^\alpha$ implies the inequality $b(\theta)\le
\E[(\theta'\Theta_0)_+^\alpha]$. Moreover, if
$\E[(\theta'\Theta_0)_+^\alpha]> 0$ then $b(\theta)>0$ by an
application of the mean value theorem when $0<\alpha\le 1$. These two
properties are shared 
by the  extremal index of a multivariate stationary process. The
extremal index  admits a similar representation in terms of 
the spectral tail process, i.e. 
$\E [ (\sup_{t\ge 0}\theta'\Theta_t )^\alpha_+- (\sup_{t\ge 1}\theta'\Theta_t )^\alpha_+ ]$; see \cite{basrak:krizmanic:segers:2011}.\ere
\bre The limit $b$ 
also exists for various classes of \regvary\ stationary processes 
beyond \fct s of a \MC ; see~\cite{bartkiewicz:jakubowski:mikosch:wintenberger:2011,mikosch:wintenberger:2011}
for such examples in
the case $d=1$. The cluster index $b$ plays a crucial role for
characterizing weak and \ld\ limits for partial sums of the processes
$(X_t)$. This was recognized in
\cite{bartkiewicz:jakubowski:mikosch:wintenberger:2011,mikosch:wintenberger:2011},
and we extend some of these results to the multivariate case in Section~\ref{sec:stab}.
\ere
\begin{proof}
We will show that the limit $b(\theta)$ of \eqref{eq:?} exists as
$\kto$.
We start with the case $\alpha>1$. Then, for $x,y\in \bbr$,
 by the mean value theorem,
$|(x+y)^\alpha_+-
x_+^\alpha|\le ( \alpha |y| |x+\xi y|^{\alpha-1})\vee |y|^\alpha$ for some
$\xi\in (0,1)$.
Hence, since $|\theta'\Theta_0|\le 1$ a.s.,
\beao
|b_{k+1}(\theta)-b_k(\theta)|&\le& \E\Big[\Big(\alpha 
\,|\theta'\Theta_0|\,\Big|\sum_{t=1}^k \theta'\Theta_t+\xi
\theta'\Theta_0 \Big|^{\alpha-1}\Big)\vee |\theta'\Theta_0|^\alpha\Big]\\
&\le& \E\Big[\Big(\alpha 
\,\Big|\sum_{t=1}^k \theta'\Theta_t+\xi
\theta'\Theta_0 \Big|^{\alpha-1}\Big)\vee 1\Big]=I_0\,.
\eeao
For $\alpha\in (1,2]$, 
\beao
I_0\le 1+\alpha \sum_{t=0}^k \E|\theta'\Theta_t|^{\alpha-1}\,.
\eeao
We will show that the \rhs\ is finite,
implying that 
$\E\big |\sum_{t=0}^\infty |\theta'\Theta_t|\big|^{\alpha-1}<\infty$ and
$\sum_{t=0}^\infty \theta'\Theta_t$
converges absolutely a.s. An application of Lebesgue dominated
\con\ shows that the limit $b(\theta)$ exists and is finite.
For $\alpha>2$, an application of Minkowski's inequality yields
\beao
I_0\le 1+ \alpha \Big( \sum_{t=0}^k (\E  |\theta'\Theta_t|^{\alpha-1})^{1/(\alpha-1)}\Big)^{\alpha-1}\,.
\eeao
We will show that the \rhs\ is finite and then the same argument as for $\alpha\in (1,2]$ applies. We will achieve the bounds for $I_0$ 
by showing that there exists $c>0$ \st\
\beam\label{eq:mom}
\E  |\theta'\Theta_t|^{\alpha-1}\le c\,\beta^t\,,\quad  t\ge 0\,.
\eeam
Using the fact that $\Theta_t=Y_t/Y_0$ and $Y_0$ are independent, for $t\ge 1$
and $s=\alpha-1$,
\beao
\E|Y_0|^s\,\E|\theta '\Theta_t|^s\le\E|Y_0|^s\, \E|\Theta_t|^s=\E |Y_t|^s\,.
\eeao
By definition of the tail process  and Markov's inequality, for small
$\epsilon>0$ \st\ $s (1+\epsilon)<\alpha$,
\beao
\E |Y_t|^s &=&\int_0^\infty \P(|Y_t|^s>y) \,dy\\
& =&
\int_0^\infty \lim_{\xto} \P(|x^{-1}X_t|^s>y\mid |X_0|>x) \,dy\\
\eeao
\beao
&\le &\int_1^\infty y^{-(1+\epsilon)}\,dy
\lim_{\xto} \dfrac{\E[|X_t|^{s(1+\epsilon)}\,\1_{\{|X_0|>x\}}]}
{x^{s(1+\epsilon)}\P(|X_0|>x)}\\
&&+\int_0^1 y^{-(1-\epsilon)}\,dy
\lim_{\xto} \dfrac{\E[|X_t|^{s(1-\epsilon)}\,\1_{\{|X_0|>x\}}]}
{x^{s(1-\epsilon)}\P(|X_0|>x)}\\
&=& R_1+R_2\,.
\eeao
By virtue of  ${\bf (DC_{\it p})}$ for some 
$p\in (\alpha-1,\alpha)$, using a recursive argument, 
we obtain for sufficiently 
large $y$ and $s(1+\epsilon)\le p$, $$
\E[|X_t|^{s(1+\epsilon)}\mid \Phi_0=y]\le \beta^t |f(y)|^{s(1+\epsilon)} + b\,\sum_{j=1}^t\beta^j.
$$
Using this inequality and Karamata's theorem (see
\cite{bingham:goldie:teugels:1987}), for some $c>0$,
\beao
R_1&\le & c\,\lim_{\xto} \dfrac{\E\big[ \1_{\{|X_0|>x\}}
\E[|X_t|^{s(1+\epsilon)}\mid \Phi_0]\big]}
{x^{s(1+\epsilon)}\P(|X_0|>x)}\\
&\le &c\,\beta^t\,\lim_{\xto} \dfrac{\E[|X_0|^{s(1+\epsilon)} \1_{\{|X_0|>x\}}
]}
{x^{s(1+\epsilon)}\P(|X_0|>x)}
\le c \beta^t\,.
\eeao
Similarly, $R_2\le c \beta^t$. We conclude that \eqref{eq:mom} holds for $\alpha>1$.
\par
It remains to consider the case $\alpha\le 1$. 
We observe that $|(x+y)_+^\alpha -x_+^\alpha|\le |y|^\alpha$ 
for any $x,y\in\bbr$. Hence
\beao
|b_{k+1}(\theta)-b_k(\theta)|\le \E|\theta'\Theta_0|^\alpha\le 1.
\eeao
It suffices to show that $\sum_{t=0}^\infty |\theta'\Theta_t|<\infty$ a.s.
This follows if $\sum_{t=0}^\infty \E |\theta'\Theta_t\big|^s<\infty$ for
some $ s<p$. The proof is analogous, using
${\bf (DC_{\it p})}$ for some $p<\alpha$.
\end{proof}

\section{Limit theory for functions of regenerative \MC s}\label{sec:stab}\setcounter{equation}{0}
In this section we present the main results of this paper.
Throughout we consider an $\bbr^d$-valued process 
$X_t=f(\Phi_t)$, $t\in\bbz$, where $(\Phi_t)$ is an irreducible aperiodic
\MC. We present two types of limit results for the
partial sums $(S_n)$ of $(X_n)$: central limit theory with infinite
stable limits in Theorem~\ref{main} and precise \ld\ results in
Theorem~\ref {thm:ldp}. The proofs of these results are postponed to
Sections~\ref{sec:mainproof} and \ref{sec:proofldp}.

\subsection{Stable limit theory}\label{sec:main}
\setcounter{equation}{0}
We start with a  \clt\ with stable limit law.
\bth\label{main}
Consider an $\bbr^d$-valued strictly stationary \seq\ $(X_t)=(f(\Phi_t))$ satisfying
the following conditions:
\begin{itemize}
\item
${\bf (RV_\alpha)}$ for some $\alpha\in (0,2)$, $\E X=0$ if $\alpha>1$ and $X$ is symmetric if $\alpha=1$.
\item
 ${\bf (DC_{\it p})}$ for some $p\in ((\alpha-1)\vee 0,\alpha)$.
\end{itemize}
Let $(a_n)$ be a \seq\ of positive numbers \st\
$n\,\P(|X_0|>a_n)\sim 1$. Then the following statements hold:
\begin{enumerate}
\item
The \clt\ 
$
a_n^{-1} S_n \std \xi_\alpha
$ is satisfied for a centered $\alpha$-stable random vector
$\xi_\alpha$  with spectral \ms\ $\Gamma_\alpha$ 
on $\bbs^{d-1}$  (see 
\cite{samorodnitsky:taqqu:1994}, Section 2.3, for a definition)
given by the relation
\beam\label{eq:b1}
b(\theta)=C_\alpha\,\int_{\bbs^{d-1}}
(\theta's)^\alpha_+\Gamma_\alpha(ds)
\,,\quad \theta\in\bbs^{d-1}\,,
\eeam
where $b$ is the cluster index of $(X_t)$ introduced in Section~\ref{subsec:cluster}
and
\beam\label{eq:calpha}
C_\alpha&=& \dfrac{1-\alpha}{\Gamma(2-\alpha) \cos(\pi\alpha/2)}\,.
\eeam
If $b\equiv 0$ the limit $\xi_\alpha=0$ a.s.
\item
If $b\ne 0$ the partial sums over full cycles $(S(i))_{i=1,2,\ldots}$  defined in \eqref{eq:partial}
are \regvary\ with index $\alpha$ and spectral \ms\ $\P_{\Theta'}(\cdot)$
on $\bbs^{d-1}$ given by
\beam\label{eq:seopc}
d\P_{\Theta'}(ds)
=\dfrac{b(s) }{\int_{\bbs^{d-1}}b(\theta)\,d\P_{\Theta}(\theta)}\, d\P_{\Theta}(ds)\,.
\eeam
\end{enumerate}
\ethe
The proof of Theorem~\ref{main} is given in
Section~\ref{sec:mainproof}.
 To a large extent, the results of
Theorem~\ref{main} can be extended to the case of non-irreducible \MC
s. A short discussion of this topic will be given at the end of Section~\ref{sec:mainproof}.

\subsection*{ A discussion of related stable limit
  results}\label{subsec:rel}
 Theorem~\ref{main} complements the \clt\ with Gaussian limits 
for $\bbr^d$-valued \fct s of a \MC ; see Theorem~\ref{thm:samur}
above. For both results, conditions of type  
 ${\bf (DC_{\it p})}$  enter the proofs to show the existence of
moments of $S(1)$ under the existence of the corresponding moments for
$X_0$.
\par
 The history of stable limit theory for non-linear multivariate 
\ts\ is short in comparison with the finite variance case.
Davis and Mikosch \cite{davis:mikosch:1998} prove a \clt\ with
$\alpha$-stable limit for an $\bbr^d$-valued strictly stationary \seq\
$(X_t)$, satisfying a weak dependence condition.  
The result is a straightforward extension of the 1-dimensional
result proved in Theorem 3.1 of Davis and Hsing \cite{davis:hsing:1995}.
We recall the forementioned results for the reason of comparison 
with Theorem~\ref{main}.
\bth\label{thm:davhs}
Assume that the strictly stationary $\bbr^d$-valued 
\seq\ $(X_t)$ satisfies ${\bf (RV_\alpha)}$ for some $\alpha>0$ 
and the following \pp\ \con\ result holds:
\beao
N_n=\sum_{t=1}^n\delta_{a_n^{-1} X_t}\std N=
\sum_{i=1}^\infty\sum_{j=1}^\infty\delta_{P_i Q_{ij}}\,,
\eeao 
where $(P_i)$ are the points of a 
Poisson random \ms\ on $(0,\infty)$ with intensity 
$h(y)=\gamma \alpha y^{-\alpha-1}$, $y>0$, and it is assumed that 
$\gamma>0$,\footnote{  Basrak and Segers \cite{basrak:segers:2009},
Proposition~4.2, show that $\gamma>0$ is automatic if $(X_t)$ satisfies their
anti-clustering Condition 4.1 and a modification of the mixing 
${\mathcal A}(a_n)$ from \cite{davis:hsing:1995}. Both conditions are
very mild. The quantity $\gamma$ is known as the {\em extremal index}
of the \seq\  $(X_t)$; see \cite{leadbetter:lindgren:rootzen:1983}.}
the \seq\ $(Q_{ij})_{j\ge 1}$, $i=1,2,\ldots,$
is iid with values $|Q_{ij}|\le 1$, independent of $(P_i)$ 
and \st\ $sup_{j\ge 1}|Q_{ij}|=1$.
\begin{enumerate}
\item[\rm (1)] If $\alpha\in (0,1)$ then 
\beao
a_n^{-1} S_n\std \xi_\alpha=\sum_{i=1}^\infty\sum_{j=1}^\infty P_i Q_{ij}
\eeao
and $\xi_\alpha$ has an $\alpha$-stable \ds ,
\item[\rm (2)]
If $\alpha\in [1,2)$ and for any $\delta>0$,
\beam\label{eq:sv}
\lim_{\vep\downarrow 0}\limsup_{\nto}\P (|S_n(0,\vep]- 
\E S_n(0,\vep] |>\delta)=0\,,
\eeam
where $S_n(0,\vep]=a_n^{-1}\sum_{t=1}^n X_t \1_{\{|X_t|\le \vep a_n\}}$, then 
\beao
a_n^{-1} S_n- \E S_n(0,1]\std \xi_\alpha\,,
\eeao
where $\xi_\alpha$ is the \ds al limit as $\vep\downarrow 0$ of 
\beao
\Big(\sum_{i=1}^\infty\sum_{j=1}^\infty P_i Q_{ij} \1_{(\vep,\infty)}( P_i |Q_{ij}|)
-\int_{\vep<|x|\le 1} x\,\mu_X(dx)\Big)
\eeao
which exists and has an $\alpha$-stable \ds . (Recall that $\mu_X$ is
the limit \ms\ in \eqref{eq:1}.)
\end{enumerate}
\ethe
The latter result has been the basis for a variety of results
for partial sums of strictly stationary processes 
with infinite variance stable limits; 
see 
\cite{davis:mikosch:1998,mikosch:starica:2000,basrak:krizmanic:segers:2011,tyran:2010}. The main idea of the proof of  Theorem~\ref{thm:davhs}
is a continuous mapping argument acting on $N_n\std N$, showing
that the sums of the points of $N_n$ converge in \ds\ to the
corresponding sum of the points of $N$. This method is rather
elegant and can be applied to a large variety of strictly stationary 
\regvary\ vector \seq s $(X_t)$. The proofs use advanced point process
techniques.
\par
A characterization of the parameters of the \ds\ of the 
multivariate limit $\xi_\alpha$ in Theorem~\ref{thm:davhs}
can be given by extending 
Theorem 3.2 in \cite{davis:hsing:1995} to the multidimensional case:
if 
\beam\label{eq:36}
\E(\sum_{j\ge 1} |Q_{1j}|)^\alpha<\infty
\eeam 
then the \levy\ spectral
\ms\ $\Gamma_\alpha$ of $\xi_\alpha$ is described by
\beao
\int_{\bbs^{d-1}} (\theta's)^\alpha_+\Gamma_\alpha(ds)=\gamma \frac{\alpha}{2-\alpha}\,\E\Big[\Big(\sum_{t\ge 1}\theta'Q_{1t}\Big)^\alpha_+ \Big]\,,\quad \theta\in\bbs^{d-1}\,.
\eeao
This representation is particularly useful for $\alpha<1$. Then
\eqref{eq:36} is always satisfied. 
Adapting Theorem \ref{thm:davhs} in terms of the tail process 
as in Basrak et al. \cite{basrak:krizmanic:segers:2011}, 
an alternative characterization of the \levy\ spectral measure 
$\Gamma_\alpha$ is the following:
if 
\beam\label{eq:37}
\E(\sum_{t\ge 0} |\Theta_{t}|)^\alpha<\infty\eeam
 then 
$$
\int_{\bbs^{d-1}} (\theta's)^\alpha_+\Gamma_\alpha(ds)=  C_\alpha^{-1}\E\Big[\Big(\sum_{t\ge 0}\theta'\Theta_{t}\Big)^\alpha_+\1_{\{\Theta_i=0,\,\forall i\le -1\}} \Big]\,,\quad \theta\in\bbs^{d-1}\,.
$$
Conditions \eqref{eq:36} and \eqref{eq:37} may fail for $\alpha>1$,
e.g. for a GARCH(1,1) model; see Section~\ref{exam:garch}. 
\par
 If we assume the conditions of Theorem~\ref{main}, classical
computation for $\alpha\ne 1$ yields
\beao
\E[\exp(iv'\xi_\alpha)]&=&\exp(-\int_{\bbs^{d-1}}|v'\theta|^\alpha(1-i\mbox{sign}(v'\theta)\tan(\pi\alpha/2))\Gamma_\alpha(d\theta))\\
&=&\exp\Big(\int_0^\infty\E\Big[\exp\Big(iu\sum_{t=1}^\infty v'\Theta_t\Big)-\exp\Big(iu\sum_{t=0}^\infty v'\Theta_t\Big)\Big]\alpha x^{-\alpha-1}dx\Big)\,.
\eeao
For $\alpha\in (0,1)$, this form of the limiting stable \chf\ was
proved in Basrak and Segers \cite{basrak:segers:2009}. 
\par
The additional condition \eqref{eq:sv}
is not easily checked for dependent \seq s. 
It is implied for stationary $\rho$-mixing processes with rate \fct\ 
$\rho(j)$ satisfying $\sum_{j\ge 1}\rho(2^j)<\infty$; 
 see \cite{jakubowski:1997}. 
It is also implied by  ${\bf (DC_{\it p})}$ for functions of an 
irreducible \MC ;  see \cite{mikosch:wintenberger:2011}. 
For  a  (possibly non-irreducible) \MC\ $(X_t)$, 
condition   ${\bf (DC_{\it p})}$ is much weaker than this $\rho$-mixing condition
which  is equivalent to a spectral gap in $L^2(\P)$; see
\cite{meyn:tweedie:1993}.  
\par
In our paper, \chf\ based 
methods are employed which are close to those used in classical limit
theory for iid \seq s; see e.g. \cite{petrov:1995}.
As in the iid case, Theorem \ref{main} yields an explicit form
of the \chf\ of the limiting $\alpha$-stable random vector.
The underlying extremal dependence structure of $(X_t)$ shows
via the cluster  index $b(\theta)$ which appears
explicitly in the \chf .   We refer the reader to the
extensive discussion in 
\cite{bartkiewicz:jakubowski:mikosch:wintenberger:2011} on 
the comparison of the point process and 
the \chf\ approaches to stable limit theory. One
drawback of our approach is that, in contrast to
the point process approach, 
we do not have series representations of $\xi_\alpha$ in terms 
of the \seq\ $(\Theta_t)$.
\par
Recently, the special case of solutions to   
multivariate \sre s \eqref{eq:sre} has attracted attention; see e.g. 
\cite{damek:mentemeier:mirek:zienkiewicz:2011,buraczewski:damek:guivarc'h:2010}. In this case, one can exploit the underlying  random 
iterative contractive structure to derive stable limits without 
additional restrictions. We mention that drift  conditions 
such as ${\bf (DC_{\it p})}$ are automatically satisfied for 
solutions of \sre s; see Section \ref{sec:ex}.

\subsection{Precise large deviations for functions of a \MC}\label{sec:ldp}\setcounter{equation}{0}
In this section, we extend some of the results obtained in 
\cite{mikosch:wintenberger:2011} for general univariate \regvary\ 
\seq s.\footnote{ For comparison and since we will use it in the proofs, 
we quote the main result of 
\cite{mikosch:wintenberger:2011} as Theorem~\ref{ldMW}.} 
 We again focus on $\bbr^d$-valued \seq s  
$(X_t)=(f(\Phi_t))$ for an underlying aperiodic irreducible \MC\ $(\Phi_t)$. 
The case $\alpha\in (0,2)$ turns out to be a con\seq\ of
Theorem~\ref{main}; the proof is given in Section~\ref{sec:smallalpha}.
The proof in the case $\alpha>2$ is more involved and requires different
techniques; see Section~\ref{sec:largealpha}. 
\bth\label{thm:ldp}
{ Consider an $\bbr^d$-valued
strictly stationary \seq\ $(X_t)=(f(\Phi_t))$ for an aperiodic irreducible 
\MC\ $(\Phi_t)$. Assume that $(X_t)$  satisfies the condition 
${\bf (RV_\alpha)}$ for some $\alpha>0$. Let 
$(\la_n)$ be any \seq\ such that 
$\log(\la_n)=o(n)$ and
$\la_n/n^{1/\alpha+\vep}\to
\infty$ if $\alpha\in (0,2)$ and 
$\la_n/n^{0.5+\vep}\to\infty$ if $\alpha>2$ 
for any 
$\vep>0$. Assume either 
\begin{enumerate}
\item[(1)]
$\alpha\in (0,2)$ and the conditions of 
Theorem~\ref{main} are satisfied, or
\item[(2)]
$\alpha>2$, $\alpha\not\in \bbn$ or
$b(\theta)=b(-\theta)$, $\theta\in \bbs^{d-1}$, and ${\bf (DC_{\it p})}$ holds for every $p<\alpha$,
\end{enumerate}
then the following \ld\ principle holds:
\beam\label{eq:lda}
\dfrac{\P (\lambda_n^{-1}S_n\in\cdot)}{n\,\P(|X|>\lambda_n)}\stv
\nu_\alpha\,,\quad \nto\,,
\eeam
where $\nu_\alpha$ is a Radon \ms\ on the Borel $\sigma$-field of $\ov
\bbr^d_0$  
uniquely determined by the relations
\beam\label{eq:cb}
\nu_\alpha(t\{x: \theta'x >1\})= t^{-\alpha} \nu_\alpha(\{x: \theta'x >1\})=t^{-\alpha} \,b(\theta)\,,\quad \theta\in\bbs^{d-1},\; t>0\,.
\eeam}
\ethe
\bre 
The conditions $\alpha\not\in \bbn$ or $b(\cdot)=b(-\cdot)$
are needed to apply inverse results for \regvar .
For $\alpha>2$,  
we show that the \ms\ $\nu_\alpha$ on the Borel $\sigma$-field of 
$\ov \bbr_0^d$ is uniquely determined by its values on sets of the form
$t\{x: \theta'x >1\}$, $t>0$, $\theta\in\bbs^{d-1}$, provided  the
mentioned additional conditions are met. In general, such conditions
cannot be avoided; \cite{kesten:1973,hult:lindskog:2006a} give counterexamples 
for integer values $\alpha$.
In \cite{basrak:davis:mikosch:2002,boman:lindskog:2009,kluppelberg:pergam:2007}
further conditions on the vector $X$ are given which allow one to 
discover the \ms\ $\nu_\alpha$ from its knowledge on the sets
$t\{x: \theta'x >1\}$, $t>0$, $\theta\in\bbs^{d-1}$.
\ere
\bre
The proof of Theorem~\ref{thm:ldp} shows that \eqref{eq:lda} holds 
uniformly for certain intervals of normalizations and 
for half-spaces not containing the origin. To be precise, the following 
uniform relations hold
 \beam\label{eq:rig}
\lim_{n\to \infty}\sup_{x\in \Lambda_n}\Big|\frac{\P(\theta'S_n> x)}{n\,\P(|X|> x)}-b(\theta) \Big|=0\,,\quad \theta\in \bbs^{d-1}\,,
\eeam
for regions $\Lambda_n=(b_n,c_n)$. Here 
$(b_n)$
satisfies $b_n=n^{0.5+\vep}$ in the case $\alpha>2$ and 
 $b_n=n^{1/\alpha+\vep}$ in the case $\alpha\in (0,2)$ for any $\vep>0$, 
and $(c_n)$ is chosen \st\ $c_n>b_n$ and $\log c_n=o(n)$. 
  Moreover, for \eqref{eq:rig} one does not need the additional
  conditions $b(\cdot)=b(-\cdot)$ and $\alpha\not\in\bbn$.
\ere

\section{Examples}\label{sec:ex}\setcounter{equation}{0}
 Here we consider several examples of \regvary\ stationary processes with
index $\alpha>0$, 
where the theory of the previous sections 
applies. In particular, we will determine the tail process 
$(\Theta_t)$, the cluster index $b$ and verify the drift condition
 ${\bf (DC_{\it p})}$ for $p<\alpha$. All models considered fall in the class
of \fct s acting on an aperiodic irreducible \MC .

\subsection{Vector-autoregressive process}\label{exam:ar1}
Consider the vector-autoregressive process of order 1 given
by
\beam\label{eq:ar1}
X_t = A\,X_{t-1} + Z_t\,,\quad t\in\bbz\,,
\eeam
where  $A$ is a random $d\times d$ matrix 
whose eigenvalues are
less than 1 in absolute value, and $A$ is independent of the 
iid $\bbr^d$-valued 
\seq\ $(Z_t)$  which is \regvary\ with index
$\alpha>0$. Then we also have $\E\|A\|^s<1$ for every $s>0$.
Here $\|\cdot\|$ denotes the operator norm with respect to the Euclidean norm.

Then a stationary solution $(X_t)$  to \eqref{eq:ar1}
exists and has \rep 
\beao
X_t= A^t X_0 + \sum_{i=1}^t A^{t-i} Z_i\,,\quad t\ge 0\,;
\eeao 
see \cite{brockwell:davis:1991}, Chapter 11. 
Morever,  $X_0$ is \regvary\ with index $\alpha$; see \cite{resnick:willekens:1991}. In particular, denoting the limiting \ms\ of the \regvary\ 
vector $Z_0$ by $\mu_Z$, it follows from \cite{resnick:willekens:1991} that
\beam\label{eq:willekens}
\dfrac{\P( x^{-1} X_0\in \cdot)}{\P(|Z_0|>x)}\stv
\sum_{i=0}^\infty \E\big[\mu_{Z}(\{x\in \bbr^d: A^i x\in \cdot\}\big)\big]\,. 
\eeam
Since 
\beao
(X_1,\ldots,X_h)= (A,\ldots, A^h) X_0+ 
\Big(Z_1,\ldots, \sum_{t=1}^h A^{h-t} Z_t
\Big)\,,
\eeao
and $(Z_t)_{t\ge 1}$ is independent of $X_0$, \regvar\ of $(X_1,\ldots,X_h)$
is a con\seq\ of the fact that \regvar\ is kept under linear transformations.
Let $C$ be a continuity set relative to the limiting 
\ms\ $\mu_{h+1}$ of $(X_0,\ldots,X_h)$ and $I_d$ the identity matrix. Since $X_0$ is independent of $(Z_t)_{t\ge 1}$, 
\beao
\P(x^{-1} (X_0,\ldots,X_h)\in C \mid |X_0|>x)
&=&\P(x^{-1}(I_d,A,\ldots, A^h) \,X_0  \in C \mid |X_0|>x)\\
&&+\P(x^{-1}(0,Z_1, \sum_{i=1}^2 A^{2-i} Z_i,\ldots, \sum_{i=1}^h A^{h-i}
Z_i\in C) + o(1)\\
&\to  &\P((I_d,A,\ldots ,A^h) Y_0\in C) \,,\quad \xto\,.
\eeao
Thus we may identify $(\Theta_t)_{t=0,\ldots,h}$ with 
$(I_d,A,\ldots, A^h) \Theta_0$.
In view of \eqref{eq:willekens},
\beao
\P( x^{-1} X_0/|X_0|\in \cdot\mid |X_0|>x)\stw
\dfrac{\sum_{i=0}^\infty \E\big[\mu_{Z}(\{x\in \bbr^d: A^i x/|A^i x|\in
  \cdot\,,|A^i x|>1\}\big)\big]}{\sum_{i=0}^\infty \E\big[\mu_{Z}(\{x\in \bbr^d: |A^i
    x|>1\})\big]}=\P(\Theta_0\in \cdot)\,.
\eeao
Writing $(I_d-A)^{-1}=\sum_{t=0}^\infty A^t$ (this series converges since the largest eigenvalue of $A$ is smaller than 1), we conclude that 
\beao
b(\theta)=
\E\Big[\big(\theta' (I_d-A)^{-1}\Theta_0\big)_+^{\alpha}
-\big(\theta' A (I_d-A)^{-1} \Theta_0\big)_+^{\alpha}
\Big]\,,\quad \theta\in \bbs^{d-1}\,.
\eeao
Next we show  ${\bf (DC_{\it p})}$ for $p<\alpha$. First assume $p>1$.
A Taylor series expansion yields
\beao
\E( |A x+Z_1|^p- |A x|^p) &\le &p \E [|Z_1| \,|Ax + \xi Z_1|^{p-1}]\\
&\le &c\,(\E|Ax|^{p-1}+1)\le c\,( |x|^{p-1} +1)
\eeao
for some \rv\ $\xi\in (0,1)$ a.s.
Then for some $\beta\in (\E\|A\|^p,1)$ and sufficiently large $|x|$,
\beao
\E |A x+Z_1|^p\le \E|A x|^p + c \,(1+ |x|^{p-1})\le 
\E\|A\|^p |x|^p (1+c\,|x|^{-1}) +c\le \beta |x|^p +c\,,
\eeao
and  ${\bf (DC_{\it p})}$ is satisfied. If $p\le 1$ a simpler 
argument applies with $\beta=\E\|A\|$:
\beao
\E( |A x+Z_1|^p\le  \E|A x|^p +\E|Z_1|^p\le \beta \,|x|^p +c\,.
\eeao
  If the \MC\ $(X_t)$ is also aperiodic and irreducible
the results in Section~\ref{sec:stab} are  directly applicable with $f(x)=x$.
\subsection{Random affine mapping}\label{exam:1}
Following Kesten \cite{kesten:1973}, we consider the
\sre
\beam\label{eq:sre}
X_t= A_t\,X_{t-1}+ B_t\,,\quad t \in\bbz\,,
\eeam
where $((A_t,B_t))_{t\in\bbz}$ is an iid \seq , $A_t$ are
random $d\times d$-matrices and $B_t$ are $\bbr^d$-valued random
vectors. We also assume $\E \log ^+\|A\|<\infty$, where $\|\cdot\|$ denotes the operator norm \wrt\ the Euclidean norm,
$\E \log ^+|B|<\infty$, and that the Lyapunov exponent of the \sre\ \eqref{eq:sre} is negative. 
These conditions ensure that
an a.s.  unique stationary causal solution  $(X_t)$ to  \eqref{eq:sre}
exists; see \cite{bougerol:picard:1992}.  
Under additional regularity conditions which ensure that the \ds\ 
of $A$ is sufficiently spread out, 
the equation   
\beam\label{eq:lyap}
\varrho(\kappa)=\lim_{\nto} n^{-1} \log \E\|A_1\cdots
A_n\|^\kappa=0\,,\quad \kappa>0\,,
\eeam
has a  unique positive solution $\alpha$ and $\theta'X$,
$\theta\in\bbs^{d-1}$, is \regvary\ with index $\alpha$.
 Under stronger conditions on $A$, $\alpha$ can be
calculated as the solution to
$E\|A\|^\kappa=1$, $\kappa>0$; see
\cite{gao:guivarch:lepage:2011,buraczewski:damek:guivarc'h:hulanicki:urban:2011}
for recent results. 
Kesten  \cite{kesten:1973} had already given conditions which ensured that at least one of the 
linear combinations $\theta'X$, $\theta\in\bbs^{d-1}$,
is \regvary\ with index $\alpha$. In general, one cannot conclude from \regvar\ of  $\theta'X$, $\theta\in\bbs^{d-1}$,
that $X$ is \regvary ; see \cite{kesten:1973,hult:lindskog:2006a} for some counterexamples.\footnote{However, it might be possible to prove \regvar\ of $X_t$ by using the structure of \eqref{eq:sre}.}
In \cite{basrak:davis:mikosch:2002,boman:lindskog:2009,kluppelberg:pergam:2007}
conditions are given which ensure that the \regvar\
of a vector can be recovered from the \regvar\ of its linear 
projections. One of
these conditions is that $\alpha \not\in \bbn$; 
see \cite{basrak:davis:mikosch:2002} for details.
 In what follows, we will assume that $X_t$ is \regvary\ with 
index $\alpha>0$ and that the stronger moment conditions 
$\E\|A\|^{2(\alpha+\epsilon)}<\infty$ and 
$\E|B|^{2(\alpha+\epsilon)}<\infty$
hold for some $\epsilon>0$. If $A_t$ and $B_t$ are independent the 
milder moment conditions
$\E\|A\|^{\alpha+\epsilon}<\infty$ and
$\E|B|^{\alpha+\epsilon}<\infty$ for some $\epsilon>0$ suffice.
\par
Calculation yields
\beam\label{eq:sre2}
X_t=\Pi_t X_0 +R_t\,,\quad \mbox{where}\quad \Pi_t=A_t\cdots A_1\,,\quad
t\ge 1\,,
\eeam
where $\E |R_t|^{\alpha+\epsilon}<\infty$
and hence
\beao
\P( x^{-1} (X_0,\ldots,X_t)\in \cdot \mid |X_0|>x)&\stw& \P(|Y_0|\,
(I_d,\Pi_1,\ldots \Pi_t)\Theta_0\in\cdot )\,.
\eeao
where $\P (X_0/|X_0|\in \cdot\mid  |X_0|>x)\stw \P(\Theta_0\in
\cdot)$ and $\Theta_0$ is independent of $(A_t)_{t\ge 1}$.
Therefore $(\Theta_i)_{i=0,\ldots,t}=(I_d,\Pi_1,\ldots, \Pi_t)\Theta_0$.
Writing $\Pi_0=I_d$, the identity matrix in $\bbr^d$, 
and $(Z_t)$ for the solution of the \sre\ \eqref{eq:sre}
in the special case $B=I_d$, 
we obtain from Theorem~\ref{lem:sum},
\beam\label{eq:jak}
b(\theta)&=& \E\Big[\Big(\theta'\sum_{t\ge 0} \Pi_t
\Theta_0\Big)^\alpha_+-\Big(\theta'\sum_{t\ge
  1}\Pi_t\Theta_0\Big)^\alpha_+\Big]\nonumber\\
&=&\E\Big[\Big(\theta'(Z_1+I_d)
\Theta_0\Big)^\alpha_+-\Big(\theta'Z_1\Theta_0\Big)^\alpha_+\Big]
\,,\quad
\theta\in \bbs^{d-1}\,,
\eeam
provided we can show  ${\bf (DC_{\it p})}$ 
for the \MC\ $(\Phi_t)=(X_t)$.
The formula \eqref{eq:jak} is in agreement with the calculations for $d=1$ in
\cite{bartkiewicz:jakubowski:mikosch:wintenberger:2011}.
\par 
Since  \eqref{eq:lyap}
is satisfied we can use Lemma \ref{lem:skel} for proving ${\bf (DC_{\it p})}$.
Assuming irreducibility and aperiodicity of the \MC\ $(X_t)$ and
exploiting the definition of $\alpha$ as solution to \eqref{eq:lyap},  
one can choose $m\ge 1$ sufficiently large such that 
$\E\|A_1\cdots
A_m\|^p<1$ for any $p<\alpha$. Indeed, assume on the contrary that 
$\varrho(p)\ge 0$ for some $p<\alpha$. This contradicts the convexity
of $\varrho$ which has roots at $0$ and $\alpha$.
Then the $m$-skeleton of the chain
satisfies the drift condition, ${\bf (DC_{\it p,m})}$ follows and 
Lemma  \ref{lem:skel} yields ${\bf (DC_{\it p})}$.
Thus we conclude that the results of Section~\ref{sec:stab}
are directly applicable to the \MC\ $(X_t)$   with $f(x)=x$  if it is also
aperiodic and irreducible. 
\subsection{Sample autocovariance \fct\ of one-dimensional random affine
  mapping}\label{exam:acf}
Consider the solution $(X_t)$ to the \sre\ \eqref{eq:sre} in
  the case  $d=1$, under irreducibility and aperiodicity. We assume the
conditions and use the notation of Section~\ref{exam:1}. 
In addition, we write
\beao
\Pi_{s,t}=\left\{\barr{ll} A_s\cdots A_t& s\le t\,,\\
1& \mbox{otherwise.}\earr\right.
\eeao
In particular, we assume that $(X_t)$ is \regvary\ with index
$\alpha>0$ satisfying $\E|A|^\alpha=1$,  $\E|A|^{\alpha+\vep}<\infty$ and $\E|B|^{\alpha+\varepsilon}<\infty$ for some $\varepsilon>0$. For $h\ge 0$,
consider the process $\Phi_t=(X_t,X_{t-1},\ldots,X_{t-h})'$,
$t\in\bbz$, of lagged vectors. They 
constitute an $\bbr^{h+1}$-valued stationary, aperiodic and irreducible
\MC. Similar arguments as in
 Section~\ref{exam:1} 
show that the chain is \regvary\ with index $\alpha>0$. 
We consider the   following 
\fct\ acting on the \MC\ $(\Phi_t)$:
\beao
  {\bfX}_t= f(\Phi_t)= (X_t \Phi_t,X_t,\Phi_t)\,,\quad t\in\bbz\,.
\eeao
  By convention, we will assume that all vectors are understood
  as column vectors. 
The \seq\ $(\Phi_t)$ satisfies the recursion
$$
\Phi_t=\left(\begin{matrix}A_t&0& \cdots&0&0\\ 1&0&\cdots&0&0\\0&1&\cdots&0&0\\
\vdots&\vdots&\ddots&\vdots&\vdots \\
0&0&\cdots &1&0
\end{matrix}\right) \Phi_{t-1} +\left(\begin{matrix}B_t\\0\\0\\\vdots\\
  0\end{matrix}\right)= {\bf A}_t  \Phi_{t-1}+ {\bf B}_t\,,\quad t\in\bbz\,.
$$
We will show that $(\bfX_t)$ satisfies 
${\bf (RV_{\alpha/{\rm 2}})}$ and ${\bf (DC_{\it p,m})}$ for $m$
sufficiently large, $V(x)=|x|^p$ and $p<\alpha/2$.  
The condition $ \E(
V(f(\Phi_1))\mid \Phi_0=y)\le c_1V(f(y))+c_2$ for some positive
$c_1,c_2$ follows immediately from
the \sre\ 
$$
{\bfX}_t=\left(\begin{matrix}A_t{\bf A}_t&A_t{\bf B}_t& B_t{\bf A}_t\\ 0_{1,h+1}&A_t&0_{1,h+1}\\0_{h+1,h+1}&0_{h+1,1}&{\bf A}_t
\end{matrix}\right) {\bfX}_{t-1} +\left(\begin{matrix}B_t{\bf B}_t\\B_t\\
 {\bf B}_t\end{matrix}\right)={\bf C}_t {\bfX}_{t-1}+{\bf D}_t\,,\quad t\in\bbz\,.
$$
Condition $\E|{\bf D}|^{(\alpha+\varepsilon)/2}<\infty$ follows by the
assumptions.
From basic algebra,  for $m\ge h$ the matrix 
products $\prod_{t=1}^mA_t{\bf A}_t=\Pi_t \prod_{t=1}^m \bfA_t$ 
can be written as $\Pi_{m,h}{\bf M}_h$, where the $(h+1)\times (h+1)$ 
matrix ${\bf M}_h$ has zero entries but the first column given by
$(\Pi_{1,h-1},\Pi_{2,h-1},\ldots,1)$.
Products of triangular matrices remain triangular  and  
their diagonal is the product of the diagonals. Thus we obtain
$$
{\bf C}_m\cdots {\bf C}_1=\left(\begin{matrix}\Pi_{m,h}^2I_h&0_{1,h+1}& 0_{h+1,h+1}\\ 0_{1,h+1}&\Pi_{m,h}
&0_{1,h+1}\\0_{h+1,h+1}&0_{h+1,1}&\Pi_{m,h} I_h
\end{matrix}\right) \wt {\bf C}_h= \wt {\bf D}_m \wt {\bf C}_h\,,
$$
where $\wt {\bf C}_h$ is an upper triangular block matrix 
depending only on  $(A_t)_{1\le t\le h-1}$.  The matrices $ \wt {\bf D}_m$ and $\wt {\bf C}_h$ are independent and for some $c>0$ we have
\beao
\E\|\wt {\bf D}_m \wt {\bf C}_h\|^p&\le& \E\|\wt {\bf D}_m\|^p\E\| \wt {\bf C}_h\|^p\\
&\le& c\E[|A_m|^{2p}\cdots |A_{h}|^{2p}+|A_m|^{p}\cdots |A_{h}|^{p}]\E\| \wt {\bf C}_h\|^p.
\eeao
Since $p<\alpha/2$, $\E(|A_0|^{2p})^m\to 0$ and  $\E(|A_0|^{p})^m\to
0$ as $m\to \infty$. Thus, for  $m$ sufficiently large, 
$\E\|\wt {\bf D}_m \wt {\bf C}_h\|^p\le c\, (\E(|A_0|^{2p})^m +
\E(|A_0|^{p})^m)<1$, i.e. condition ${\bf (DC_{\it p,m})}$ holds,
 and Lemma \ref{lem:skel} applies provided we can also show ${\bf
  (RV_{\alpha/{\rm 2}})}$ for $(\bfX_t)$. This is our next goal.
Since $X_t$ and $\Phi_t$ are \regvary\ with
index $\alpha$ we deal with a degenerate case where the limiting
\ms\ of \regvar\ of $\bfX_t$ is concentrated at zero for the last
$h+2$ components. Then, in view of the definition of the cluster
index, $b$ is the same for $(X_t\Phi_t)$ and  $(\bfX_t)$.
Therefore we will calculate $b$ for $(X_t\Phi_t)$. Abusing notation,
we will also use the same notation for the tail process.
 As in Section~\ref{exam:1} 
we obtain by iteration of the \sre\ $X_t=A_tX_{t-1}+B_t$,
\beam\label{eq:iter}
X_{t} \Phi_{t}&=&\Pi_{t-h+1,t}
(\Pi_{t-h+1,t},\Pi_{t-h+1,t-1},\ldots,1)' X_{t-h}^2 +\bfR_t^{(1)}\nonumber\\
&=&\Pi_{1-h,t-h}^2\Pi_{t-h+1,t}
(\Pi_{t-h+1,t},\Pi_{t-h+1,t-1},\ldots,1)' X_{-h}^2 +\bfR_t^{(2)}\,,\nonumber\\
&=& \Pi_{1-h ,t}
(\Pi_{1-h,t},\Pi_{1-h,t-1},\ldots,\Pi_{1-h,t-h})' X_{-h}^2 +\bfR_t^{(2)}\,,
\eeam
where $\E|\bfR_t^{(i)}|^{(\alpha+\vep)/2}<\infty$, $i=1,2$. Then for  $t\ge 0$,
\beao\lefteqn{
(X_0\Phi_0,\ldots,X_t \Phi_t)}\\
&=&\left(\begin{matrix}\Pi_{1-h,0}^2 &\Pi_{1-h,1}^2
&\cdots& \Pi_{1-h,t}^2
\\\Pi_{1-h,0} \Pi_{1-h,-1}  &\Pi_{1-h,1}\Pi_{1-h,0} 
&\cdots&\Pi_{1-h,t} \Pi_{1-h,t-1}\\\vdots&\vdots&\ddots&\vdots\\\Pi_{1-h,0}
& \Pi_{1-h,1}A_{1-h}&\cdots &\Pi_{1-h,t}\Pi_{1-h,t-h}
\end{matrix}\right) X_{-h}^2
+\bfQ_t\,,\quad t\in\bbz\,.
\eeao
and $\E|\bfQ_t|^{(\alpha+\vep)/2}<\infty$. In the remainder of this
section we assume that $P(A=0)=0$; the general case can be treated as well
but leads to tedious case studies.
An application of Corollary 3.2 in Basrak and Segers
\cite{basrak:segers:2009} yields that for continuity sets $M$,
\beao
\P(x^{-1} (X_0\Phi_0,\ldots,X_t \Phi_t) \in M \mid |X_0\Phi_0|>x)
\to P(|Y_0| \bfE_t \in M)\,,
\eeao
where 
\beao
\bfE_t \eqd \frac{1}{|\Pi_h|\sqrt{\Pi_{h}^2+\Pi_{h-1}^2+\cdots +1}}\left(\begin{matrix}
\Pi_{h}\Pi_h &\Pi_{h+1}\Pi_{ h+1}
&\cdots&\Pi_{t+h} \Pi_{t+h}
\\ \Pi_h\Pi_{h-1}  &\Pi_{h+1} \Pi_{h}
&\cdots&\Pi_{t+h}\Pi_{ t+h-1}\\\vdots&\vdots&\ddots&\vdots\\\Pi_h
&\Pi_{h+1}\Pi_1&\cdots &\Pi_{t+h}\Pi_t
\end{matrix}\right)
\eeao
and $\bfE_t$ is independent of $|Y_0|$. The \rhs\ can be identified
with $(\Theta_0,\ldots,\Theta_t)$.
\par
An application of Theorem~\ref{main} now yields a stable limit result
for the sample autocovariance  \fct\ of $(X_t)$: Assume that 
$(a_n)$ satisfies $n\,\P(|X_0 \Phi_0|>a_n)\sim 1$. In view of
\eqref{eq:iter} and Breiman's result (see \cite{breiman:1965}) we also
have 
\beao
n \P(|X_0 \Phi_0|>a_n)\sim n \P( X^2>a_n)
\, \E\Big[\Big(|\Pi_h|\sqrt{1+\Pi_1^2+\cdots + \Pi_h^2}\big)^{\alpha/2}\Big]\,.
\eeao
In view of Kesten's result \cite{kesten:1973}, $\P(|X|>x))\sim c_0
x^{-\alpha}$. Therefore we can choose 
\beao 
a_n= n^{2/\alpha} \big(c_0 \E\Big[\Big(|\Pi_h|\sqrt{1+\Pi_1^2+\cdots +
  \Pi_h^2})^{\alpha/2}\Big]\Big)^{2/\alpha}
\,.
\eeao
Then we have for $m\ge 0$,  $\alpha\in (2,4)$,
\beao
\Big(a_n^{-1}\sum_{t=1}^{n-h} \big(X_tX_{t+h}- 
\E(X_0 X_h)\big)\Big)_{h=0,\ldots,m}\std \xi_{\alpha/2}\,,
\eeao
and for $\alpha\in (0,2)$,
\beao
\Big(a_n^{-1}\sum_{t=1}^{n-h} X_tX_{t+h}\Big)_{h=0,\ldots,m}\std \xi_{\alpha/2}\,,
\eeao
where $\xi_{\alpha/2}$ is an $\alpha/2$-stable $\bbr^{h+1}$-valued
random vector whose  \chf\ is given in
Theorem~\ref{main} and $(\Theta_t)_{t\ge 0}$ is described above.
This result was proved in Basrak et
al. \cite{basrak:davis:mikosch:2002a},
Theorem 2.13. In the case $\alpha\in (2,4)$ the additional condition
(2.20) was needed; the latter condition is hardly verifiable and could
be overcome in the present paper by showing condition ${\bf (DC_{\it
    p})}$.
Moreover, as in \cite{basrak:davis:mikosch:2002a} a straightforward
application of the \cmt\ yields a corresponding limit result for the
sample autocorrelation \fct ; we omit details. 
The limit laws in Theorem 2.13 of \cite{basrak:davis:mikosch:2002a} are expressed in terms of the points
of the limiting \pp es in Theorem~\ref{thm:davhs} above, while our
limits are expressed in terms of the cluster index $b$. Neither of the 
\rep s of the $\alpha$-stable limits are easy due to the complicated
dependence structure.

\subsection{Sample mean of a \garch\ process and its volatility,
  sample covariance \fct\ of a \garch\ process }\label{exam:garch}
We consider a \garch\ process $X_t=\sigma_t\,Z_t$, where
$(Z_t)$ is an iid \seq\ of mean zero unit variance \rv s and 
$(\sigma_t)$ is a \seq\ of  non-negative \rv s \st\ 
$\sigma_t^2 = \alpha_0 + \sigma_{t-1}^2 (\alpha_1 Z_{t-1}^2+\beta_1)$.
Here $\alpha_0,\alpha_1,\beta_1$ are positive constants. The latter
equation is of Kesten type \eqref{eq:sre} with 
$A_t=\alpha_1 Z_{t-1}^2+\beta_1$ and $B_t=\alpha_0$. We assume that
the conditions of Section~\ref{exam:1} are satisfied, in particular,
\beao
\P(\sigma>x)\sim c_0 x^{-\alpha}\,,\quad \xto\,,
\eeao
for some constant $c_0>0$ and tail index $\alpha>0$, satisfying 
$\E (\alpha_1 Z_{0}^2+\beta_1)^{\alpha/2}=1$. We also assume that 
$\E|Z|^{\alpha+\epsilon}<\infty$ for some $\epsilon>0$. 
Rewriting \eqref{eq:sre2}, we have 
\beao
(\sigma_0^2,\ldots,\sigma_t^2)=\sigma_0^2 (1,\Pi_1,\ldots,\Pi_t)+ \bfR_t\,,
\eeao
where $\E|\bfR_t|^{(\alpha+\epsilon)/2}<\infty$ and also 
$\E|\Pi_i|^{(\alpha+\epsilon)/2}<\infty$ for $i\ge 1$. An application of Breiman's 
multivariate  result (see Basrak et
al. \cite{basrak:davis:mikosch:2002}) shows that for any continuity
set $M$ as $\xto$,
\beao
\dfrac{\P(x^{-1}(\sigma_0,\ldots,\sigma_t)\in M)}{\P(\sigma >x)}
&\sim& 
\dfrac{\P(x^{-1}\sigma_0(1,\Pi_1^{0.5},\ldots,\Pi_t^{0.5})\in
  M)}{\P(\sigma >x)}\\
&\to& 
\int_0^\infty \alpha y^{-\alpha-1} P(y(1,\Pi_1^{0.5},\ldots,\Pi_t^{0.5})\in
  M) \,dy\,.
\eeao
This shows that \regvar\ of $(\sigma_t)$ with index $\alpha$ follows
from the \regvar\ of $\sigma$. This
property is inherited by the \seq\ $(X_t)$. We observe that as $\xto$,
\beao
\dfrac{\P(|(X_0,\ldots,X_t)-\sigma_0
(Z_0,\Pi_1^{0.5}
Z_1,\ldots,\Pi_t^{0.5}Z_t)|>x)}{\P(\sigma>x)}&\le &
\dfrac{\P( |Z_1| R_1^{0.5}+ \cdots + |Z_t| R_t^{0.5}>x)}{\P(\sigma>x)}=o(1)\,.
\eeao
In the last step we used the independence of $Z_i$ and $R_i$ as well
as the moment condition on $Z$. Condition ${\bf (RV_\alpha)}$ for
$(X_t)$ now follows. This property was proved in 
Mikosch and \sta\ \cite{mikosch:starica:2000} under the additional
condition that $Z$ be symmetric. The above calculation shows that this
assumption can be avoided.
\par
Next we consider the 2-dimensional \MC\
\beao
\Phi_t=(\sigma_t,X_t)'=\sigma_t (1,Z_t)'\,,\quad t\in \bbz\,.
\eeao
A similar calculation as above shows that this \MC\ satisfies ${\bf
  (RV_\alpha)}$ and for $h\ge 0$, any continuity set $N$,
observing that $|\Phi_0|=\sigma_0\sqrt{Z_0^2+1}$,
\beao\lefteqn{
\P( x^{-1}(\Phi_0,\ldots,\Phi_h)\in N\mid |\Phi_0|>x)}\\
&\sim & \P\big( x^{-1} 
\sigma_0\big((1,Z_0)', \Pi_1^{0.5}(1,Z_1)',\ldots,\Pi_{h}^{0.5}
(1,Z_h)'\big) \in N\mid |\Phi_0|>x\big)\\
&\stw& \P( |Y_0|
\big((1,Z_0)',\Pi_1^{0.5}(1,Z_1)',\ldots,\Pi_{h}^{0.5}
(1,Z_h)'\big)/(Z_0^2+1)^{0.5}\in N)\,.
\eeao
Identifying the limiting vector with $|Y_0|(\Theta_0,\ldots,
\Theta_h)'$, we have for any $\theta\in \bbs$,
\beao
b(\theta) =\E\Big[\Big\{\Big(\theta'(1,Z_0)'+ 
\sum_{t\ge 1}\Pi_{t}^{0.5} \theta' (1,Z_t)'\Big)_+^{\alpha}-\Big( 
\sum_{t\ge 1}\Pi_{t}^{0.5} \theta' (1,Z_t)'\Big)_+^{\alpha}\Big\}\Big/(Z_0^2+1)^{\alpha/2}\Big]\,.
\eeao
\par
The \MC\ $(\Phi_t)$ is aperiodic and irreducible under 
classical conditions on the density of the $Z$; see
e.g. \cite{mikosch:starica:2000} for details. The condition 
${\bf (DC_{\it p})}$ for $p<\alpha$ follows by an 
application of Lemma \ref{lem:skel} for $V(x)=|x|^p$. We recall that
for $m\ge 2$,
$\sigma_m^2=\Pi_{2,m}(\alpha_0+\alpha_1 X_0^2+\beta_1 \sigma_0^2)+
 \wt R_m$, where $\E|\wt R_m|^{(\alpha+\epsilon)/2}<\infty$ for some $\epsilon>0$
and  $\wt R_m$ is independent of $Z_m$.
We have for $p<\alpha$, some $c>0$,
\beam\label{eq:999}
\E[|\Phi_m|^p\mid \Phi_0=\bfy] 
&=&  \E|\Pi_{2,m}(\alpha_0+\alpha_1 y_1^2+\beta_1 y_2^2)+\wt R_m|^{p/2}\E (Z^2+1)^{p/2}\nonumber\\
&\le &  |\bfy|^p \E|\Pi_{2,m} |^{p/2}\E (Z^2+1)^{p/2} \max(\alpha_1^p,\beta_1^p)+c\,.
\eeam
For $m=1$, we find constants $c_1,c_2>0$ \st\ 
 $\E(V(\Phi_1)|\Phi_0=\bfy)\le c_1 V(\bfy)+c_2$ .
Since $\E A^{p/2}<1$ for $p<\alpha$,  ${\bf (DC_{\it p,m})}$ holds for
sufficiently large $m$ in view of \eqref{eq:999}.  An application of Lemma
\ref{lem:skel} concludes the proof. Thus we may apply the stable limit
theory of Theorem~\ref{main} with $f(x)=x$ to $(\Phi_t)$ for $\alpha<2$ and the limit
law is determined by the cluster index $b$ above.
\par
For $h\ge 0$ consider the \MC , recycling the notation $\Phi_t$,
\beam\label{eq:mcnew}
\Phi_t&=&(X_t,\sigma_t,\ldots,X_{t-h},\sigma_{t-h})\,,\quad t\in
\bbz\,.
\eeam
We also write
\beao
\Phi_t^2&=&(X_t^2,\sigma_t^2,\ldots,X_{t-h}^2,\sigma_{t-h}^2)\,,\quad t\in\bbz\,,
\eeao
and introduce the \fct\ $f$ acting on $(\Phi_t)$ given by
\beao
\bfY_t=f(\Phi_t)= \big(X_t (X_{t-1},\ldots,X_{t-h}), \Phi_t^2,
\Phi_t\big)\,,\quad t\in \bbz\,.
\eeao
We intend to show  ${\bf (DC_{\it p,m})}$ for $p<\alpha/2$ and some large $m$. We restrict ourselves to
the case $h=1$; the general case is analogous but requires more 
accounting.
We observe that for suitable constants $c>0$,
\beao
|f(\Phi_t)|^p &=&|X_t^2 X_{t-1}^2+ X_{t}^4+X_{t-1}^4+\sigma_t^4+
\sigma_{t-1}^4+
X_t^2+X_{t-1}^2+\sigma_t^2+
\sigma_{t-1}^2 |^{p/2}\\
&\le &c \big((1+Z_t^4) (1+X_{t-1}^4+\sigma_{t-1}^4)
+Z_t^2 (1+\sigma_{t-1}^2+X_{t-1}^2) (1+X_{t-1}^2)+X_{t-1}^2+1+\sigma_{t-1}^2\big)^{p/2}
\eeao
Then for suitable constants $c_1,c_2>0$,
\beao
\E[|f(\Phi_1)|^p\mid \Phi_{0}=\bfy]
&\le & c\big(1+ |y_1^2|^p+ |y_2^2|^p+  |y_1|^p+ |y_2|^p\big)
\big)\\
&\le & c_1 |f(\bfy)|^p+c_2\,.
\eeao
By a similar argument, for sufficiently large $m\ge 1$, suitable
constants $c>0$,
recalling that $\sigma_t^2=\Pi_t\sigma_0^2+R_t$, where $\sigma_0^2$ is
independent of $(\Pi_t,R_t)$, and 
\beao
\E[|f(\Phi_m)|^p\mid \Phi_{0}=\bfy]
&\le & c\big(1+\E[|\sigma_{m-1}^4|^{p/2}+|\sigma_{m-1}^2|^{p/2}\mid
\Phi_{0}=\bfy]\big)\\
&\le & c\big(1+ \E\Pi_m^{2p}\,|y_2|^{2p} + \E\Pi_m^{p}]\,
|y_2|^{p}\big)\\
&\le & c\,\big(\E[\Pi_m^{2p}] + \E[\Pi_m^{p}]\big)\,|f(\bfy)|^p+c\\
&\le &\beta |f(\bfy)|^p+c\,,
\eeao
for some $\beta\in (0,1)$, sufficiently large $m\ge 1$. Here we used
the fact that $\E A ^{2p}<1$ for $p<\alpha/2$. Now we can apply 
Lemma~\ref{lem:skel} to show ${\bf (DC_{\it p})}$ for $p<\alpha/2$ 
\par
It remains to show ${\bf (RV_{\alpha/{\rm 2}})}$ for $(\bfY_t)$ defined in
\eqref{eq:mcnew}. The $\Phi_t$-component of $\bfY_t$ is \regvary\ with
index $\alpha$.  Therefore, without loss of generality and abusing notation, 
we will consider the \seq\ 
\beao
\bfY_t=f(\Phi_t)= \big(X_t (X_{t-1},\ldots,X_{t-h}), \Phi_t^2)\,,\quad
t\in \bbz\,.
\eeao
Similar arguments as in the first part of this subsection 
and as in Section~\ref{exam:1} show for $t\ge 0$ that
\beao
&&\bfY_t=  \bfR_t^{(1)}+\\&&\sigma_{t-h}^2 
\big(Z_t \Pi_{t-h+1,t}^{0.5}(Z_{t-1}\Pi_{t-h+1,t-1}^{0.5},\ldots,
Z_{t-h} ), (\Pi_{t-h+1,t}(Z_t^2,1),\ldots, (Z_{t-h}^2,1))
\big)'\\&&= \bfR_t^{(2)}+\\&&\sigma_{-h}^2 \Pi_{1-h,t-h} 
\big(Z_t \Pi_{t-h+1,t}^{0.5}(Z_{t-1}\Pi_{t-h+1,t-1}^{0.5},\ldots,
Z_{t-h} ), (\Pi_{t-h+1,t}(Z_t^2,1),\ldots, (Z_{t-h}^2,1))
\big)'\,,
\eeao
where $\E | \bfR_t^{(i)}|^{(\alpha+\vep)/2}<\infty$, $i=1,2$.
Therefore 
\beao
(\bfY_0,\ldots,\bfY_t)'=\wt \bfD_t\sigma_{-h}^2
 + \wt\bfQ_t\,,
\eeao
where $\E | \wt \bfQ_t|^{(\alpha+\vep)/2}<\infty$ and  $\E | \wt
\bfD_t|^{(\alpha+\vep)/2}<\infty$ for some $\vep>0$
and
\beao
\wt\bfD_t= \left(
\begin{matrix} 
Z_0Z_{-1}\Pi_{1-h,0}^{0.5}\Pi_{1-h,-1}^{0.5}& 
Z_1Z_{0}A_{1-h}\Pi_{2-h,1}^{0.5}\Pi_{2-h,0}^{0.5}&
\cdots&
Z_tZ_{t-1}\Pi_{1-h,t-h}\Pi_{t-h+1,t}^{0.5}\Pi_{t-h+1,t-1}^{0.5}\\
Z_0Z_{-2}\Pi_{1-h,0}^{0.5}\Pi_{1-h,-2}^{0.5}&
Z_1Z_{-1}A_{1-h}\Pi_{2-h,1}^{0.5}\Pi_{2-h,-1}^{0.5}&
\cdots &
Z_tZ_{t-2}\Pi_{1-h,t-h}\Pi_{t-h+1,t}^{0.5}\Pi_{t-h+1,t-2}^{0.5}\\ 
\vdots&\vdots&\ddots&\vdots\\
  Z_0Z_{-h}\Pi_{1-h,0}^{0.5}&
Z_1Z_{1-h}A_{1-h}\Pi_{2-h,1}^{0.5}&
\cdots &
Z_tZ_{t-h}\Pi_{1-h,t-h}\Pi_{t-h+1,t}^{0.5} \\
\Pi_{1-h,0}(Z_0^2,1)&A_{1-h}\Pi_{2-h,1}(Z_1^2,1)&\cdots &\Pi_{1-h,t-h}\Pi_{t-h+1,t}(Z_t^2,1)\\
\Pi_{1-h,-1}(Z_{-1}²,1)&A_{1-h}\Pi_{2-h,0}(Z_0^2,1)&\cdots &\Pi_{1-h,t-h}\Pi_{t-h+1,t-1}(Z_{t-1}^2,1)\\
\vdots&\vdots&\ddots&\vdots\\
(Z_{-h}^2,1)&A_{1-h} (Z_{1-h}^2,1)&\cdots &\Pi_{1-h,t-h} (Z_{t-h}^2,1)
\end{matrix}
\right)\,.
\eeao
Notice that 
$\sigma_{-h}^2$ and $\wt \bfD_t$ are independent and that
$\sigma_{-h}^2$
is \regvary\ with index $\alpha/2$.
Then 
${\bf (RV_{\alpha/{\rm 2}})}$ for $(\bfY_0,\ldots,\bfY_t)$ follows by an application of
the  multivariate Breiman result; see
\cite{basrak:davis:mikosch:2002}. We omit the calculation of the
cluster index; it is similar to its calculation in
Section~\ref{exam:1}.
\par
Now we can apply Theorem~\ref{main} to prove limit theory with
$\alpha/2$-stable limits, $\alpha<4$, for the sample autocovariance
\fct\ of the \garch\ process. The corresponding theory using 
\pp\ techniques is given in
\cite{davis:mikosch:1998,mikosch:starica:2000}. There the limit
theory for the \seq s $(|X_t|)$ and $(X_t^2)$ was also provided. The same
results can be provided by Theorem~\ref{main} by calculating the
corresponding cluster indices.   Applied to the squares $(X_t^2)$ we obtain in particular for $\alpha\in (2,4)$,
\beam\label{eq:ac}
na_n^{-1}\frac 1n\sum_{t=1}^{n-h} X_t^2X_{t+h}^2-\Big(\frac1n\sum_{t=1}^{n} X_t^2\Big)^2\std \xi_{\alpha/4}\,,
\eeam
where $\xi_{\alpha/4}$ is an $\alpha/4$-stable \rv\ whose  \chf\ is given in
Theorem~\ref{main} and $(\Theta_t)_{t\ge 0}=(cZ_t^2Z_{t+h}^2\Pi_t\Pi_{t+h})_{t\ge 0}$ for some $c>0$. In particular, the $\Theta_t$s are non negative   and thus $b_-=0$. Then $\xi_{\alpha/4}$ is supported on $[-(\E X_0^2)^2,\infty)$. We omit further details. Relation \eqref{eq:ac} supports the idea of spurious long-range dependence effects observed on real-life log-return data which are often observed to have infinite fourth moments; see \cite{mikosch:starica:2003} for a discussion.

\section{Proof of Theorem~\ref{main}}\label{sec:mainproof}\setcounter{equation}{0} 
\subsection{Proof of part (1)} We will use the Cram\'er-Wold device to show that
$(a_n^{-1} \theta 'S_n)$ has a  (possibly degenerate)
$\alpha$-stable limit $\xi_\alpha(\theta)$ for every $\theta\in
\bbs^{d-1}$.
We will apply Theorem 1 in 
\cite{bartkiewicz:jakubowski:mikosch:wintenberger:2011} which we
recall for convenience:
\bth\label{thm:clt} Assume that $(G_t)$ is a strictly
stationary process of \rv s, satisfying the following conditions.
\begin{enumerate}
\item
The \regvar\ condition ${\bf (RV_\alpha)}$ holds for some $\alpha\in (0,2)$.
\item
The mixing condition {\bf (MX)}: There exist $m=m_n\to\infty$ \st\
$k_n=[n/m_n]\to\infty$ and
\beao
\E\ex^{it b_n^{-1}S_n(G)}- \Big(\E\ex^{it
  b_n^{-1}S_m(G)}\Big)^{k_n}\to 0\,,\quad \nto\,,\quad  t\in\bbr\,,
\eeao
where $S_n(G)=G_1+\cdots +G_n$ and $(b_n)$ is chosen \st\
$n\,\P(|G_1|>b_n)\sim 1$.
\item
The anti-clustering condition
\begin{equation}\label{AC}\tag{\bf AC}
\lim_{\ell\to\infty}\limsup_{n\to\infty} \dfrac n
m\sum_{j=\ell+1}^{m}\E\left|\overline{
    x\,b_n^{-1}(S_{j}(G)-S_\ell(G))}\;\overline{ x\,b_n^{-1}
    G_{1}}\right|=0\,,\quad x\in\bbr\,,
\end{equation}
holds, where $m=m_n$ is the same as in {\bf (MX)} and $\ov x =
(x\wedge 2)\vee (-2)$.
\item
The limits
\begin{equation}\label{TB}
\tag{\bf TB} \lim_{\ell\to \infty}(b_+(\ell)-b_+(\ell-1))=c_+\mbox{ and
}\lim_{\ell\to \infty}(b_-(\ell)-b_-(\ell-1))=c_-\,,
\end{equation}
exist. Here $b_+(\ell),b_-(\ell)$ are the tail balance parameters given by
$b_+(\ell)=\lim_{\nto} n\,P(S_\ell(G)>b_n)$ and $b_-(\ell)=\lim_{\nto} n\,P(S_\ell(G)\le -b_n)$.
\item
For  $\alpha>1$ assume $\E G_1=0$ and for $\alpha=1$,
\begin{equation}\label{CT}\tag{\bf CT}
\lim_{\ell\to\infty}\limsup_{n\to\infty}n\,|\E(\sin(b_n^{-1}S_\ell(G)))|=0.
\end{equation}
\end{enumerate}
Then $c_+$ and $c_-$ are non-negative and $(b_n^{-1}S_n(G))$ converges
in \ds\ to an $\alpha$-stable \rv\ (possibly zero) with \chf\
$\psi_{\alpha}(x) = \exp( -|x|^{\alpha} \chi_{\alpha}(x, c_+,
c_-))$, where for $\alpha \neq 1$  the function $\chi_\alpha(x,c_+,
c_-), x \in \bbr$, is given by the formula
\[\dfrac{\Gamma(2-\alpha)}{ 1-\alpha }\,\Big((c_++c_-)\,\cos(\pi
\alpha/2)-i\,\sign(x) (c_+-c_-)\, \sin(\pi\, \alpha/2)\Big)\,,\]
while for $\alpha = 1$ one has
\[\chi_1(x,c_+, c_-) =
0.5\,\pi (c_++c_-) +i\,\sign(x)\,(c_+-c_-) \log |x|,\quad x\in \bbr.
\]
\ethe
We will verify the conditions of this theorem for the \seq\
$G_i=\theta'X_i$ for fixed $\theta\in \bbs^{d-1}$.
\subsubsection*{The \regvar\ condition ${\bf (RV_\alpha)}$ for $(G_t)$}
This condition is straightforward from the definition of  ${\bf
  (RV_\alpha)}$ for $(X_t)$ and the fact that the \fct\ $f(x)=\theta'
x$, $x\in \bbr^d$, is continuous and homogeneous.

\subsubsection*{The anti-clustering condition {\bf (AC)}}
Without loss of generality we assume that  ${\bf (DC_{\it p})}$ holds
for $V(y)=|y|^{p}$. We also assume $p\le 1$; for $p>1$ an application
of Jensen's inequality yields  ${\bf (DC_{\it p'})}$ for any $p'<p$.
Since $p\le 1$ there exists $c>0$
such that $y\le c\,y^ p$ for $y\in [0,2]$. Then one has
\beao
\lefteqn{T_{\ell m}=\dfrac{n}m\sum_{j=\ell+1}^{m}\E\Big[ \overline{\big|
    x\,b_n^{-1}(S_{j}(G)-S_ \ell(G))\big|}\;\overline{| x\,b_n^{-1}
    G_{1}|}\Big]}\\ &\le& c \frac nm \,\sum_{j=\ell+1}^{m}\E\Big[\overline{\big|
    x\,b_n^{-1}(S_{j}(G)-S_\ell(G))\big|^p}\;\overline{ \big| x\,b_n^{-1}
    G_{1}\big|}\Big]\,.
\eeao
Using  ${\bf (DC_{\it p})}$, a recursive argument yields
\beam\label{rec:dcp}
\E( |G_k|^{p} \mid \Phi_1=y)\le
\beta^{k-1} |f(y)|^{p}+b\,\sum_{j=1}^{k-1}\beta^{j}\,,\quad k\ge 2\,,
\eeam
where $\beta,b$ appear in  ${\bf (DC_{\it p})}$.
Multiple use of this argument
and the subadditivity of the \fct\ $z\mapsto \overline {z}$ on
$(0,\infty)$  yield for $\ell<j\le m$,
$$
\E\Big[\overline{\big| xb_n^{-1}(S_j(G)-S_{\ell}(G)) \big|^{p}}\mid
\Phi_1\Big]\le c\,\overline{|x|^{p}b_n^{-p}\sum_{i=\ell+1}^m\beta^i|X_1|^{p}}+
c b_n^{-p} \,m\,.
$$
Conditioning on $\Phi_1$, the latter inequality finally yields
\beao
\E T_{\ell m}\le c
\dfrac n
m\sum_{j=\ell+1}^{m}\E\Big[\overline{|x|^{p}b_n^{-p}
\sum_{i=1}^j\beta^i|X_1|^p}\;\overline{ x\,b_n^{-1}
    |X_{1}|}\Big]+
c\, \dfrac{ \,m\,n}{b_n^p} \;\E\overline{|xb_n^{-1}X_1|} =I_1+I_2\,.\eeao
We have 
$I_2 \le c b_n^{-p-1} n\, m=o(1)$
if we choose $m=m_n=\log^2 n$.
It remains to prove that $I_1$
is \asy ally negligible. An application of Karamata's theorem
yields the bound
$$
I_1\le  c\,\dfrac n
m\sum_{j=\ell+1}^{m}\P\Big(|X_1|\ge c b_n(\sum_{i=\ell}^j\beta^i)^{-1/(p+1)}\Big)\le  \frac cm \sum_{j=\ell+1}^{m}(\sum_{i=\ell}^j\beta^i)^{\alpha/(p+1)}\le c \beta^{\ell\alpha/(p+1)}.
$$
The \rhs\ vanishes as  $\ell\to\infty$. Collecting the above bounds,
condition  {\bf (AC)} follows.
\subsubsection*{The mixing condition {\bf (MX)}}
Here we give a significant improvement on
Lemma 3 in 
\cite{bartkiewicz:jakubowski:mikosch:wintenberger:2011}; in the latter
paper it is assumed that $(G_t)$ is strongly mixing. The next result
avoids this condition.
\ble\label{lem:known} Consider a strictly stationary real-valued \seq\ $(G_t)$
satisfying the conditions
${\bf (RV_\alpha)}$ for some $\alpha\in (0,2)$ and
{\bf (AC)}. Then {\bf (MX)} can be replaced by\\
Condition {\bf (MX')}:
There exists a sequence $(r_n)$ \st\ $r_n=o(m_n)$ and
\beao
|\varphi_n^{(\ell)}(t)-\varphi_{n,m-\ell}^k(t)|\to 0\,,\qquad  t\in\R\,.
\eeao
holds for $\ell=m-r_n$ and $\ell=r_n$, where
\beao
\varphi_n^{(\ell)}(t)&=&\E\Big[\exp\Big(itb_n^{-1}\sum_{i=1}^{k_n}\sum_{t=(i-1)m+1}^{im-\ell}G_t\Big)\Big]\,,\\
\varphi_{n,j}&=&
\E\Big[\exp\Big(itb_n^{-1}\sum_{t=1}^{j}G_t\Big)\Big]\,,\quad
j\ge 1\,,\quad \varphi_n(t)=\varphi_{n,n}(t)\,,\quad t\in \bbr\,.
\eeao
\ele
\begin{proof}
Notice that condition {\bf (MX)} can be written in the form
$\varphi_n(t)-\varphi_{n,m}^k(t)\to 0$ as $\nto$.
We have
\beao
\varphi_n(t)-\varphi_{n,m}^k(t)&=&
[\varphi_n(t)-\varphi_n^{(r)}(t)]
+[\varphi_n^{(r)}(t)-\varphi_{n,m-r}^k(t)]+
[\varphi_{n,m-r}^k(t)-\varphi_{n,m}^k(t)]\\&=&
P_1+P_2+P_3\,.
\eeao
In view of {\bf (MX)'}, $P_2\to 0$. Next we deal with $P_1$.
Assume for
simplicity that $k_n=n/m$ is an integer.
We use the classical Bernstein blocks technique, writing
$$
S_n=b_n^{-1}\sum_{i=1}^{k_n}\sum_{t=(i-1)m+1}^{im-r}G_t+b_n^{-1}\sum_{i=1}^{k_n}\sum_{t=im-r+1}^{im}G_t=I_1+I_2\,.
$$
We will show that $\E\exp(itI_2)\to 1$.
Condition {\bf (MX)'}
implies that
$|\E\exp(itI_2)-\varphi_{n,r}^k(t)|\to 0$ as $\ell=m-r\ge r$ and
$\ell /n\to 0$. Moreover, Lemma 3.5 in \cite{petrov:1995}
yields that $\varphi_{n,r}^k(t)\to 1$ \fif\  $k(\varphi_{n,r}(t)-1)\to
0$. Assuming ${\bf (RV_\alpha)}$ and {\bf (AC)}, one can follow
the proof of Lemma 1 in 
\cite{bartkiewicz:jakubowski:mikosch:wintenberger:2011}.
We have
$$
\lim_{q\to \infty}\limsup_{n\to\infty}|k\,(\varphi_{n,r}(t)-1)-
k\,r\,(\varphi_{n,q}(t)-\varphi_{n,q-1}(t))|\to0,\quad t\in\R.
$$
Under ${\bf (RV_\alpha)}$, an application of
Theorem 3 in Section XVII.5 of Feller gives that
$n(\varphi_{n,q}(t)-1)$ converges for all $q$.
We deduce that $n(\varphi_{n,q}(t)-\varphi_{n,q-1}(t))$
converges too. As $kr/n\sim r/m\to0$ we
conclude that $kr(\varphi_{n,q}(t)-\varphi_{n,q-1}(t))\to0$
and then $k_n(\varphi_{n,r}(t)-1)\to0$ which gives the desired result
$\E\exp(itI_2)\to 1$, equivalently, $I_2\stp 0$. Since
\beao
|P_1|= \Big|\E\Big[\exp(it (I_1)( 1-\exp(it I_2)))\Big]\Big|\le
\E\Big|1-\exp(it I_2)
\Big|\,,
\eeao
dominated \con\ yields $P_1\to 0$. Finally,
\beao
|P_3|\le k\,\Big|(\varphi_{n,m-r}(t)-1)-
(\varphi_{n,m}(t)-1)\Big|\to 0\,.
\eeao
and the same arguments as above show that $P_3\to 0$.
\end{proof}
We finish the proof of {\bf (MX)} for the \seq\ $(G_t)$. In view
of   ${\bf (DC_{\it p})}$, $(X_t)$, hence $(G_t)$, are $\beta$-mixing, hence
strongly mixing, with
exponential rate $(\alpha_h)$. We will show {\bf (MX)} by an application of
Lemma~\ref{lem:known}. A standard telescoping sum argument shows that
\beao
|\varphi_n^{(\ell)}(t)-\varphi_{n,m-\ell}^k(t)|&\le c\,k_n \alpha_\ell\,.
\eeao
Since we choose $m=\log^2 n$ in the proof of {\bf (AC)}, $k_n  \alpha_\ell\le (n/\log^2 n) \exp(-c
\ell_n)$. Thus, choosing $\ell_n= C\log n$ for some sufficiently large
constant $C>0$ we have $\ell_n=o(m_n)$, $k_n  \alpha_\ell=o(1)$
and we can also find $r_n=o(\ell_n)$. This proves {\bf (MX')}, hence
{\bf (MX)}.

\subsubsection*{Condition ${\bf (TB)}$ }
Note that  $\{|\theta ' X|
>b_n\}\subset\{|X|>b_n\} $. Then
\beao
b_+(\ell)&=& \lim_{\xto} \dfrac{\P(S_\ell(G)>x)}{\P(|\theta'X|>x)}\\&=&
\lim_{\xto} \dfrac{\P(\theta 'S_\ell>x)}{\P(|X|>x)}\lim_{\xto
}\dfrac{\P(|X|>x)}{\P(|\theta'X|>x)}\\
&=& b_\ell(\theta) \,\lim_{\xto
} (\P(|\theta'X|>x\mid |X|>x))^{-1}\\
&=& b_\ell(\theta)  (\P(|Y_0| |\theta'\Theta_0|>1)))^{-1}\\
&=&b_\ell(\theta)  (\E(|\theta'\Theta_0|^\alpha))^{-1}
\,.
\eeao
Correspondingly, $b_-(\ell)=b_\ell(-\theta)(\E(|\theta'\Theta_0|^\alpha))^{-1}$. Here we assumed that
$\E(|\theta'\Theta_0|^\alpha)\ne 0$. Otherwise,
$b_+(\ell)=b_-(\ell)=0$. \\[1mm]
Thus we may apply Theorem~\ref{thm:clt} to
conclude that
$
b_n^{-1} \theta'S_n \std \xi_\alpha(\theta)
$ for an $\alpha$-stable \rv\ $\xi_\alpha(\theta)$ with \chf\
$\psi_\alpha(x,\theta)$ given by
\beao\lefteqn{
\E(|\theta'\Theta_0|^\alpha)\,\log \psi_\alpha(x,\theta)}\\
&=& -|x|^{\alpha}
\dfrac{\Gamma(2-\alpha)}{ 1-\alpha}\,\Big((b(\theta)+ b(-\theta))\,\cos(\pi
\alpha/2)-i\,\sign(x) (b(\theta)- b(-\theta))\, \sin(\pi\,
\alpha/2)\Big)\,,\quad
x\in\bbr\,.
\eeao
The factor $\E(|\theta'\Theta_0|^\alpha)$ on the \lhs\ is due to the
normalization $(b_n)$ instead of $(a_n)$. Replacing $(b_n)$ by $(a_n)$,
we have for any $v\in\bbr^d$ that
\beao
&&\E \ex^{i v' (a_n^{-1}S_n)}\to\\
&&\exp\left\{ - |v |^{\alpha}
C_\alpha^{-1}\,\Big((b(v/|v|)+b(-v/|v|))\,-i\, (b(v/|v|)-
b(-v/|v|))\, \tan(\pi\, \alpha/2)\Big)\right\}\,,
\eeao
where $C_\alpha$ is defined in  \eqref{eq:calpha}.
This is the \chf\ of an $\alpha$-stable random vector $\xi_\alpha$.
The \rep\  of the \levy\ spectral measure $\Gamma_\alpha$ in \eqref{eq:b1}
follows by calculations as in Example 2.3.4 of
\cite{samorodnitsky:taqqu:1994}. Indeed, keeping notations of \cite{samorodnitsky:taqqu:1994} and identifying the limiting law yields the equations
\beao
b(\theta)+b(-\theta)=C_\alpha\,\sigma_\theta^\alpha=C_\alpha\,\int_{\bbs^{d-1}}
|\theta's|^\alpha\Gamma_\alpha(ds)=C_\alpha\,\int_{\bbs^{d-1}}
(\theta's)_+^\alpha\Gamma_\alpha(ds)+C_\alpha\,\int_{\bbs^{d-1}}
(-\theta's)_+^\alpha\Gamma_\alpha(ds)
,\eeao
and
\beao 
b(\theta)-b(-\theta)&=&(b(\theta)+b(-\theta))\,\beta_\theta\\
&=&C_\alpha\,\int_{\bbs^{d-1}}
|\theta's|^\alpha\sign(\theta's)\Gamma_\alpha(ds)\\
&=&C_\alpha\,\int_{\bbs^{d-1}}
(\theta's)_+^\alpha\Gamma_\alpha(ds)-C_\alpha\,\int_{\bbs^{d-1}}
(-\theta's)_+^\alpha \Gamma_\alpha(ds)\,,\quad \theta\in\bbs^{d-1}\,.
\eeao
The limiting $\alpha$-stable distribution
is degenerate \fif\ $b(\theta)=0$ for all $\theta\in \bbs^{d-1}$.
\par
This proves part (1) of the theorem.

\subsection*{Stable limit theory for general \regvary\ stationary
  processes}
In this part we want to give some arguments showing that the
  results of Theorem~\ref{main} can be applied in  much more general context.
For this reason, consider a strictly stationary 
$\bbr^d$-valued \regvary\ \seq\ $(X_t)$ with index $\alpha>0$.
Then $\Phi_t=(X_t,X_{t-1},\ldots)$, $t\in\bbz$, constitutes a \MC\
with infinite-dimensional state space.
In this setting,   ${\bf (DC_{\it p})}$ for $X_t=f(\Phi_t)$
takes on the form: \\[1mm]
{\em Condition}  ${\bf (DC_{\it p}')}$: 
$$
\E(|X_1|^p\mid| (X_0,X_{-1},\ldots)=(x_0,x_{-1},\ldots))\le \beta 
|x_0|^p+b\quad \mbox{for some $0<\beta<1$ and $b>0$.}
$$
We also need a weak dependence assumption more general 
than geometric  $\beta$-mixing which, in the irreducible case, is 
implied by the drift condition.\\[1mm]
{\em Condition} {\bf (MX$_m$)} :
Consider an integer \seq\ $(m_n)$  \st\ $m=m_n\to\infty$ and
$m_n/n=o(1)$ and also write $k_n=[n/m]$.
There exists a sequence $(r_n)$ \st\ $r_n=o(m_n)$ and
\beao
\lim_{\nto}|\varphi_n^{(\ell)}(s)-\varphi_{n,m-\ell}^k(s)|\to
0\,,\quad  
s\in\R^d\,,
\eeao
holds for both $\ell=\ell_n=m_n-r_n$ and $\ell=r_n$, where
\beao
\varphi_n^{(\ell)}(s)&=&\E\Big[\exp\Big(ia_n^{-1}\sum_{i=1}^{k_n}
\sum_{t=(i-1)m+1}^{im-\ell}s'X_t\Big)\Big]\,,\\
\varphi_{n,j}&=&
\E\Big[\exp\Big(ia_n^{-1}\sum_{t=1}^{j}s'X_t\Big)\Big]\,,\quad
j\ge 1\,,\quad \varphi_n(s)=\varphi_{n,n}(s)\,,\quad s\in \bbr^d\,.
\eeao
\par
Condition {\bf (MX$_m$)} is implied by 
{\em $\theta$-weak dependence} introduced by  
 Doukhan and Louhichi \cite{doukhan:louhichi:1999}: For every $m\ge 1$, equip
$(\R^d)^m$ with the metric 
$|\cdot |_m=m^{-1}\sum_{i=1}^m|\cdot|$. 
A function $f: (\R^d)^m\mapsto [-1,1]$, $m\ge 1$, is {\em Lipschitz} if
$$
\sup_{x\neq y}\frac{|f(x)-f(y)|}{|x-y|_m}=\Lip(f)<\infty.
$$
The $\theta$-weak dependence coefficients $(\theta_r)_{r\ge 0}$ 
are defined for any $f$ with $\Lip(f)=1$ and measurable 
$g: (\R^d)^v\mapsto [-1,1]$, $v\ge 1$, as
$$
\sup_{k,v\ge 1}\sup_{i_1<\cdots <i_v\le 0\le r\le j_1<\cdots<j_m}|\cov(g(X_{i_1},\ldots,X_{i_v}),f(X_{j_1},\ldots,X_{j_m}))|=\theta_r\,.
$$
Condition 
{\bf (MX$_m$)} follows if $\theta_r\to 0$ for some $r=r_n=o(m)$ with $m=m_n$.
$\theta$-weak dependence covers a wide range of known dependence
concepts, including a large variety of mixing conditions; see \cite{doukhan:louhichi:1999}.
\par
In the general case, the following analog of 
Theorem~\ref{main} holds.
The proof follows along the lines of Theorem
\ref{main}. Irreducibility of $(X_t)$ can be replaced by {\bf
  (MX$_m$)}. We omit further details.
\bth\label{thm:nonirr} 
Consider an $\bbr^d$-valued strictly stationary \seq\ $(X_t)$ satisfying
the following conditions:
\begin{itemize}
\item
${\bf (RV_\alpha)}$ for some $\alpha\in (0,2)$, $\E X=0$ if $\alpha>1$ and $X$ is symmetric if $\alpha=1$.
\item
 ${\bf (DC_{\it p}')}$ for some $p\in ((\alpha-1)\vee 0,\alpha)$.
\item 
{\bf (MX$_m$)} for $m_n=o(n^{(p+1)/\alpha-1})$.
\end{itemize}
Let $(a_n)$ be a \seq\ of positive numbers \st\
$n\,\P(|X_0|>a_n)\sim 1$. Then the statement of part (1) of
Theorem~\ref{main}
holds.
\ethe

\subsection{Proof of part (2)} Recall the regenerative structure
of the \MC\ $(X_t)$ from
Section~\ref{subsec:drift}.
We will show that the partial sum $S(1)$ over a full regenerative
cycle is \regvary\ with index $\alpha$. 
We write
\beam\label{eq:sumdecomp}
S_n=S(0)+\sum_{t=1}^{N_A(n)}S(t)+\sum_{i=N_A(n)+1}^nX_i\,,
\eeam
where $N_A(n)=\#\{i\ge 0:\tau_A(i)\le n\}$, $n\ge 1$, is independent of
$(S(i))_{i\ge 1}$.
The first and last block sums $S(0)$ and
$\sum_{i=\tau_A(N_A(n))+1}^nX_i$ are tight. Therefore
$$
a_n^{-1} S_n=a_n^{-1}\sum_{t=1}^{N_A(n)}S(t) + o_P(1)\,.
$$
By virtue of  ${\bf (DC_{\it p})}$ for some $p>0$ the chain $(X_t)$ is
geometrically ergodic. Therefore
there exists a constant $\kappa>0$ \st
\beam\label{eq:tweedie}
\sup_{x\in A}\E_x\ex^{\kappa \tau_A}<\infty\,,
\eeam
(see  \cite{meyn:tweedie:1993},  (15.2) in Theorem~15.0.1) and hence
$\tau_A$ has exponential moment.
By a standard renewal argument, $N_A(n)/ n\stas (\E\tau_A)^{-1}$.
Then for $\epsilon,\delta>0$,
\beao
\lefteqn{\P\Big(a_n^{-1}\Big|\sum_{t=1}^{N_A(n)}S(t)-\sum_{t=1}^{n\,(\E\tau_A)^{-1}}S(t)\Big|\ge
\epsilon\Big) }\\ &\le&\P(
 |N_A(n)-n (\E\tau_A)^{-1}|>\delta N_A(n))\\&&+
 \P\Big(a_n^{-1}\Big|\sum_{t=1}^{|N_A(n)-n (\E\tau_A)^{-1}|}S(t)\Big|\ge
\epsilon\,, |N_A(n)-n (\E\tau_A)^{-1}|\le \delta N_A(n)\Big)\\
&\le & o(1)+ c\P\Big(a_n^{-1}\Big|\sum_{t=1}^{\delta N_A(n)}S(t)\Big|\ge
0.5 \epsilon\Big)\,.
\eeao
In the last step we used a maximal inequality of Ottaviani type; see
e.g. \cite{petrov:1995}, Chapter 2.
The second term on the \rhs\ is neglible, as first letting $\nto$ and then
$\delta\to 0$ since $a_n^{-1}\sum_{t=1}^{N_A(n)}S(t)\std \xi_\alpha$.
Hence
\beao
a_n^{-1}S_n=a_n^{-1}\sum_{t=1}^{n\,(\E\tau_A)^{-1}}S(t)+o_P(1)\,.
\eeao
In view of part (1), the sum of iid random vectors $(S(i))$ on the
\rhs\ has an $\alpha$-stable limit. It follows from
\cite{rvaceva:1962} that $S(1)$ is \regvary\ with index $\alpha$.
This concludes the proof.

\section{Proof of Theorem~\ref{thm:ldp}}\label{sec:proofldp}\setcounter{equation}{0} 
\subsection{Proof of part (1): The case $\alpha\in (0,2)$}\label{sec:smallalpha}
Recall the decomposition \eqref{eq:sumdecomp} of the partial 
sums $S_n$ in terms
of the
regenerative cycles of the \MC . 
We start with an auxiliary result which deals with the sums over the
first and last blocks.
\begin{lemma}\label{prel:lem}
Assume that  ${\bf (RV_\alpha)}$ and  ${\bf (DC_{\it p})}$ hold for some
$p>\alpha-1$ provided $\alpha>1$. Then there exists a constant $c>0$ such
that for any \seq\ $x=x_n\to \infty$  as $\nto$,
\beam\label{eq:33}
\P_A \Big(\sum_{t=1}^{\tau_A}|X_t|>x\Big)&\le& c\, \P(|X|>x)\,,\\
\P \Big(\sum_{t=1}^{\tau_A}|X_t|>x,\tau_A\le n\Big)&=& o(n \P(|X|>x)).\label{eq:34}
\eeam
\end{lemma}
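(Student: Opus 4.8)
The plan is to derive both bounds from the single-big-jump heuristic, using the exponential tail of $\tau_A$ to control the number of summands in a cycle and the drift condition ${\bf (DC_{\it p})}$ to control the truncated moments of the terms. Since only $|X_t|$ appears, I would first reduce to a nonnegative scalar process: $(|X_t|)=(|f|(\Phi_t))$ still satisfies ${\bf (RV_\alpha)}$, and iterating \eqref{rec:dcp} shows it satisfies ${\bf (DC_{\it p,m})}$ with $V(u)=|u|^p$ for $m$ large enough that the contraction constant is $<1$, so Lemma~\ref{lem:skel} gives ${\bf (DC_{\it p})}$ for $(|X_t|)$; thus there is no loss in assuming $X_t\ge0$. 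I also recall that ${\bf (DC_{\it p})}$ yields geometric ergodicity, hence $\sup_{y\in A}\E_y\exp(\kappa\tau_A)<\infty$ for some $\kappa>0$ (this is \eqref{eq:tweedie}) and, under the stationary law $\P$, $\E\exp(\kappa'\tau_A)<\infty$ for some $\kappa'>0$.

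For \eqref{eq:33}, fix $\theta\in(0,1)$ and use the inclusion
\[
\Big\{\sum_{t=1}^{\tau_A}|X_t|>x\Big\}\subseteq\Big\{\max_{1\le t\le\tau_A}|X_t|>\theta x\Big\}\cup\Big\{\sum_{t=1}^{\tau_A}|X_t|\1\{|X_t|\le\theta x\}>(1-\theta)x\Big\}.
\]
The ``no-big-jump'' part I would treat by Markov's inequality with an exponent $q>\alpha$, expanding $\E_A\big[\big(\sum_{t=1}^{\tau_A}(|X_t|\wedge\theta x)\big)^q\big]$ by the power-mean inequality into $\sum_{t\ge1}\E_A\big[\tau_A^{q-1}(|X_t|\wedge\theta x)^q\1\{\tau_A\ge t\}\big]$ and using $\sup_A\E_A[\tau_A^{2(q-1)}\exp(\kappa\tau_A/2)]<\infty$ together with the key estimate below; this gives a bound of order $x^{q-\alpha}L(x)\asymp x^q\,\P(|X|>x)$, hence $O(\P(|X|>x))$ after the factor $x^{-q}$. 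The ``big-jump'' part I would bound by a union bound, $\P_A(\max_{t\le\tau_A}|X_t|>\theta x)\le\sum_{t\ge1}\P_A(|X_t|>\theta x,\tau_A\ge t)$. Everything then reduces to the estimate
\[
\P_A(|X_t|>\theta x,\,\tau_A\ge t)\le c\,\gamma^t\,\P(|X|>x),\qquad t\ge1,
\]
for some $\gamma\in(0,1)$ (and its analogue for truncated moments), and proving this is the crux. It is where ${\bf (RV_\alpha)}$ and the drift condition must be combined: the factor $\P(|X|>x)$ comes from regular variation of the finite-dimensional distributions, while the factor $\gamma^t$ reflects that, as long as $A$ has not been hit, the drift forces the chain to contract, so creating a large value of $|X_t|$ at a time at which $A$ is still not reached is exponentially unlikely in $t$. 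I would make this precise by splitting according to the first time $\sigma\le t$ at which $V(f(\Phi_\sigma))$ crosses a level of order $x^p$: the one-step growth bound from ${\bf (DC_{\it p})}$ and ${\bf (RV_\alpha)}$ bound the probability of such a crossing at each $\sigma$ by $c\,\P(|X|>x)$, the contraction before $\sigma$ makes ``the chain is small and has avoided $A$ for $\sigma-1$ steps'' cost geometrically, and after $\sigma$ the ``aftershocks'' $|X_{\sigma+1}|,|X_{\sigma+2}|,\dots$ decay geometrically, so they contribute at most a fixed multiple of $x$ to the cycle sum.

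A shorter route to \eqref{eq:33} is to apply Theorem~\ref{main}(2) to the recentred \seq\ $(|X_t|-\E|X|)$ (for $\alpha>1$), or to $(|X_t|)$ itself when $\alpha\le1$. This is a \fct\ of $(\Phi_t)$ satisfying ${\bf (DC_{\it p})}$ via Lemma~\ref{lem:skel}, its spectral tail process satisfies $\Theta_0=|\Theta_0|=1$ a.s., so its cluster index obeys $b(1)\ge\E[(\Theta_0)_+^\alpha]=1>0$; hence Theorem~\ref{main}(2) gives that $\sum_{t=1}^{\tau_A}(|X_t|-\E|X|)$ is \regvary\ with index $\alpha$ under $\P_A$, with tail $\sim c\,\P(|X|>x)$. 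Adding back $\tau_A\,\E|X|$, whose tail is of order $\exp(-\kappa x/(2\E|X|))$, gives \eqref{eq:33}. There is no circularity: the proof of Theorem~\ref{main} uses only that the first and last blocks of \eqref{eq:sumdecomp} are tight, not \eqref{eq:33}.

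For \eqref{eq:34} I would run the same decomposition under $\P$ on $\{\tau_A\le n\}$. The no-big-jump part is handled as before, now with $\E\exp(\kappa'\tau_A)<\infty$, and is $O(\P(|X|>x))=o(n\,\P(|X|>x))$. For the big-jump part, $\{\tau_A\le n\}$ restricts the union bound to $t\le n$, and from the stationary start $\P(|X_t|>\theta x,\tau_A\ge t)\le\min\{\P(|X|>\theta x),\P(\tau_A\ge t)\}\le\min\{c\,\P(|X|>x),c\exp(-\kappa' t)\}$; summing over $t\le n$ gives a bound of order $(\log x\wedge n)\,\P(|X|>x)$, which is $o(n\,\P(|X|>x))$ in the regime $\log x_n=o(n)$ — the only one that occurs in the proof of Theorem~\ref{thm:ldp}, where $x$ ranges over $(b_n,c_n)$ with $\log c_n=o(n)$. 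Under the stationary law the $\gamma^t$ decay above is genuinely lost (a long excursion from $A$ is no longer exponentially rare), so this logarithmic factor is intrinsic to the method but harmless for the applications.
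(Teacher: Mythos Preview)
Your decomposition into a big-jump event $\{\max_{t\le\tau_A}|X_t|>\theta x\}$ and a truncated-sum event is exactly the paper's, but the paper's execution of both pieces is much simpler and avoids your ``key estimate'' $\P_A(|X_t|>\theta x,\tau_A\ge t)\le c\gamma^t\P(|X|>x)$ entirely. For the big-jump term the paper uses the regenerative cycle (Wald-type) identity
\[
\E_A\Big[\sum_{t=1}^{\tau_A}\1_{\{|X_t|>x\}}\Big]=\E_A[\tau_A]\,\P(|X|>x),
\]
which immediately gives $I_2\le\E_A[\tau_A]\,\P(|X|>x)$; no geometric decay in $t$ is needed. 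For the truncated-sum term the paper applies Markov with exponent $\beta k_0$, where $k_0=\lceil\alpha\rceil$ and $\beta\in(0,1)$ is chosen so that $\beta k_0>\alpha$ and $p/\beta=k_0-1$ when $\alpha>1$, and then invokes a ready-made regenerative moment bound (Proposition~4.7 of \cite{mikosch:wintenberger:2011}) to obtain $\E_A\big(\sum_{t=1}^{\tau_A}\overline{|X_t|}^{\beta}\big)^{k_0}\le c\,\E\overline{|X|}^{\beta k_0}$, after which Karamata gives $I_1\le c\,\P(|X|>x)$. Your power-mean/Cauchy--Schwarz route still leans on the unproved key estimate, and the sketch you give for that estimate (first-passage time of $V(f(\Phi_\cdot))$ above level $cx^p$, contraction before, aftershocks after) does not obviously close: ${\bf (DC_{\it p})}$ is a conditional moment bound, not a conditional tail bound, so ``probability of a crossing at $\sigma$ is $O(\P(|X|>x))$'' and ``avoiding $A$ while small costs geometrically'' both require further input (minorization, not just drift).

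For \eqref{eq:34} the paper's device is to truncate at level $x\delta$ with a \emph{free} parameter $\delta>0$ (not at a fixed fraction $\theta x$), use Pitman's occupation identity together with the same Proposition~4.7 to get $I_1'\le c\,n\,\delta^{\beta k_0-\alpha}\P(|X|>x)$, and then send $\delta\downarrow0$. This yields $o(n\,\P(|X|>x))$ for \emph{every} sequence $x_n\to\infty$, matching the lemma as stated. Your bound $O\big((\log x\wedge n)\,\P(|X|>x)\big)$ falls short of this: it gives the conclusion only when $\log x_n=o(n)$, which (as you note) suffices for Theorem~\ref{thm:ldp} but is strictly weaker than what the lemma asserts.

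Your ``shorter route'' to \eqref{eq:33} via Theorem~\ref{main}(2), applied to $(|X_t|-\E|X|)$ for $\alpha\in(1,2)$ and to $(|X_t|)$ for $\alpha\le1$, is a legitimate and rather elegant alternative the paper does not take; there is indeed no circularity since the proof of Theorem~\ref{main} only uses tightness of the boundary blocks. It does require verifying ${\bf (DC_{\it p})}$ for the centred absolute-value sequence (a short but not entirely trivial check), and it is confined to $\alpha\in(0,2)$, which is all that is needed here.
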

\begin{proof}
We start by proving \eqref{eq:33}.
For any random vector $X$ we write $\ov X= X\1_{\{|X|\le x\}}$.
Then
\beao
\P_A\Big(\sum_{t=1}^{\tau_A}|X_t|>x\Big)\le \P_A\Big(\sum_{t=1}^{\tau_A}\overline {|X_t|}>x/2\Big)+\P_A\Big(\cup_{t=1}^{\tau_A}\{\overline {|X_t|}\neq|X_t|\}\Big)=I_1+I_2.
\eeao
Using the Wald identity, we have
\beao
I_2 =\E_A\Big(\max_{1\le t\le \tau_A}1_{\{|X_t|>x\}}\Big)\le \E_A\Big(\sum_{t=1}^{\tau_A}1_{\{|X_t|>x\}}\Big)= \E_A(\tau_A)\,\P (| X|>x)\,.
\eeao
Write $k_0=\lceil \alpha \rceil$ and choose 
$0<\beta<1$ such that $\beta k_0>\alpha$. Since 
$k_0(k_0-1)\ge \alpha(\alpha-1)$ for $\alpha>1$, we will choose $\beta$ 
such that $p/\beta=k_0-1$.
Markov's inequality yields
\beao
I_1\le c \frac{\E_A\Big(\sum_{t=1}^{\tau_A}\overline{| X_t|}\Big)^{\beta k_0}}{x^{\beta k_0}}\le c \frac{\E_A(\sum_{t=1}^{\tau_A}\overline{ |X_t|}^\beta)^{k_0 }}{x^{\beta k_0}}.
\eeao
Note that $(\overline {|X_t|}^\beta)$ satisfies 
 ${\bf (DC_{\it k_0-1})}$. Under the latter condition we may apply 
Proposition 4.7 of \cite{mikosch:wintenberger:2011} to get
$\E_A(\sum_{t=1}^{\tau_A}\overline{ |X_t|}^\beta)^{k_0 }\le c\E \overline{| X|}^{\beta k_0}$. An application of Karamata's theorem shows that the \rhs\ is bounded by $c \P(|X|>x)$. This concludes the proof of \eqref{eq:33}.
\par
Now we turn to the proof of \eqref{eq:34}.
Abusing notation, we write  $\ov X=\1_{\{|X|\le x\delta\}}$ for any
fixed $\delta$. Then
\beao
\P \Big(\sum_{t=1}^{\tau_A}|X_t|>x,\tau_A\le n\Big)\le
\P\Big(\sum_{t=1}^{\tau_A}\overline {|X_t|}>x/2\,,\tau_A\le n\Big)+\P\Big(\cup_{t=1}^{\tau_A}\{\overline {|X_t|}\neq|X_t|\}\Big)=I_1'+I_2'.
\eeao
Since $\E \tau_A<\infty$ and $X$ is \regvary , we have 
\beao
I_2'\le \E(\tau_A)\,\P (| X|>x\delta)=o(n\P(|X|>x))\,.
\eeao
Similar arguments as above yield
\beao
I_1'\le c \frac{\E(\sum_{t=1}^{\tau_A}\overline {|X_t|}\1_{\{\tau_A\le n\}})^{\beta k_0}}{x^{\beta k_0}}\le c \frac{\E(\sum_{t=1}^{n}\overline{ |X_t|}\1_{\{\tau_A\ge t\}})^{\beta k_0}}{x^{\beta k_0}}.
\eeao
An argument similar to the one used in the proof of Theorem~4.6 
in \cite{mikosch:wintenberger:2011} shows that
\beao
\E\Big(\sum_{t=1}^{n} \ov {|X_t|}^\beta\1_{\{\tau_A\ge t\}}\Big)^{ k_0}\le c\,\E\Big(  \sum_{t=1}^{   n }\overline{|X_t|}^{\beta k_0}\1_{\{\tau_A\ge t\}}\Big).
\eeao
Finally, an application of  Pitman's identity \cite{pitman:1977},  Proposition 4.7 in \cite{mikosch:wintenberger:2011} and Karamata's theorem yield
\beao
\E\Big(  \sum_{t=1}^{   n }\overline {|X_t|}^{\beta k_0}\1_{\{\tau_A\ge t\}}\Big)&=& \P(X_0\in A)\,\E_A\Big(  \sum_{k=0}^{\tau_A-1}\sum_{t=1}^{   n }\overline{| X_{k+t}|}^{\beta  k_0}\1_{\{\tau_A\ge k+t\}}\Big)\\
&\le& n\,  \P(X_0\in A)\,\E_A\Big(\sum_{t=1}^{\tau_A}\overline {|X_{t}|}^{\beta k_0}\Big)
\\
&\le&c\, n   \E \overline {|X|}^{\beta k_0}\sim c\, n\, x^{\beta k_0}\delta^{\beta k_0-\alpha}\P(|X|>x). 
\eeao
Since $\beta k_0>\alpha$ and we can make $\delta$ as small as we like, 
we conclude that
$I_1'=o\big(n\,(\P(|X|>x)\big)$. This concludes the proof of \eqref{eq:34}.
 \end{proof}
Now we are ready to prove part (1). Since $\tau_A$ has
exponential moment, it follows that
$\P(\tau_A> n)=o(\P(|X|>\lambda_n)$. Therefore we may prove the result
on the event $\{\tau_A\le n\}$. We write for simplicity $\P_n(\cdot)=\P(\cdot
\cap\{\tau_A\le n\})$.
In view of Lemma~\ref{prel:lem} and 
the decomposition \eqref{eq:sumdecomp} of $S_n$ we may neglect 
the sums over the first and last cycles and it suffices to prove the \ld\ principle for the 
process   $\sum_{t=1}^{N_A(n)}S(t)$ over independent cycles. Observe that
\beao
\frac{\P_n(\lambda^{-1}_n\sum_{t=1}^{N_A(n)}S(t)\in\cdot)}{n\P(|X|\ge
  \lambda_n)}=\frac{\P_n(\lambda^{-1}_n\sum_{t=1}^{N_A(n)}S(t)\in\cdot)}{n\P(|S(1)|\ge \lambda_n)}\frac{\P(|S(1)|\ge \lambda_n)}{\P(|X |\ge \lambda_n)}.
\eeao
The same arguments as in the proof of  Lemma 4.12 in
\cite{mikosch:wintenberger:2011} (here the conditions $\la_n\to\infty$
and $\la_n/n^{\delta +1/\alpha}\to \infty$ for some $\delta>0$ are
crucial) show that for any small $\xi,\vep>0$, and any set $B$ bounded
away from zero,
\beao
\dfrac{(1-\vep)\P  (
\lambda_n^{-1}(1+\xi)^{-1}(1+\vep)^{-1}S(1)\in B )}{\E(\tau_A)\,\P(|S(1)|>\lambda_n)}
&\le& \dfrac{\P_n \Big(\lambda_n^{-1} \sum_{t=1}^{N_A(n)} S(t)\in B\Big)}{n\P(|S(1)|>\lambda_n)}+o(1)\\
& \le&
\dfrac{\P( \lambda_n^{-1}(1-\xi)^{-1}S(1)\in B)}{\E(\tau_A)\,\P(|S(1)|>\lambda_n)}+o(1)\,.
\eeao
Assume first that the cluster index $b$ does not vanish.
In view of part (2) of Theorem~\ref{main} we know that $S(1)$ is \regvary\
with index $\alpha$ and spectral \ms\ $\P_{\Theta'}$ given by \eqref{eq:seopc}, and we also know that
\beao
\dfrac{\P(\la_n^{-1} S(1)\in \cdot)}{\P(|S(1)|>\la_n)}\stv \mu_{S(1)}(\cdot)\,,
\eeao
for a non-null Radon \ms\ $\mu_{S(1)}$.
Hence, letting $\vep\to 0$ and $\xi\to 0$, 
we conclude that
$$
 \dfrac{\P _n\Big(\lambda_n^{-1} \sum_{t=1}^{N_A(n)} S(t)\in\cdot \Big)}{n\P(|S(1)|>\lambda_n)}\stv\frac{\mu_{S(1)}(\cdot)}{\E(\tau_A)}.
$$
It remains to determine the limit of $\P(|S(1)|>x)/\P(|X|>x)$ as
$\xto$.
By virtue of the proof of Theorem~\ref{main},
$a_n^{-1}\sum_{t=1}^{n/\E \tau_A} S(t)\std \xi_\alpha$. Then
necessarily
\beao
\dfrac{n}{\E \tau_A} \P(a_n^{-1} S(1)\in \cdot)\stv \nu_\alpha(\cdot)\,, 
\eeao 
where $\nu_\alpha$ is the \levy\ \ms\ of $\xi_\alpha$. Hence
\beao
\dfrac{\P(|S(1)|>a_n)}{\P(|X|>a_n)}
\sim n\P(|S(1)|\ge a_n)\to \E \tau_A \,\Gamma_\alpha (\bbs^{d-1})\,,
\eeao
where $\Gamma_\alpha$ is the spectral \ms\ of $\nu_\alpha$.
But from part (2) of Theorem \ref{main} we know that $n\P(|S(1)|\ge
a_n)\to \E\tau_A \int_{\bbs^{d-1}} b(\theta)\,dP_\Theta(\theta)$. This
proves the result in the non-generate case $b\ne 0$.
\par
In the degenerate case $b=0$,
$\P(|S(1)|\ge x)=o(\P(|X|\ge x))$ as $x\to \infty$. 
By independence of the cycles and since $\la_n/a_n\to\infty$, for any set $B$ bounded away from zero,
some $\gamma>0$,
\beao
\P_n \Big(\lambda_n^{-1} \sum_{t=1}^{N_A(n)} S(t)\in B\Big)\le \P\Big(
\Big|\sum_{t=1}^{N_A(n)} S(t)\Big|>\gamma \la_n\Big)\le
n\,c\,\P(|S(1)|\ge c\,\lambda_n)=o(n \P(|X|>a_n))=o(1)\,.
\eeao
The desired result in the degenerate case follows.
\subsection{Proof of part (2): The case $\alpha>2$.}\label{sec:largealpha}
We only consider the  non-degenerate case $b\neq0$.  We will apply Theorem~4.6 in 
\cite{mikosch:wintenberger:2011} for \fct s of \MC s in the case
$d=1$. 
\bth\label{ldMW}
Let $(G_t)=(f(\Phi_t))$ be a 1-dimensional \fct al of 
a strictly stationary $\bbr$-valued irreducible
aperiodic \MC\ $(\Phi_t)$ . Write $S_n(G)=G_1+\cdots +G_n$, $n\ge 1$, for the
corresponding random walk. Assume that the following conditions are satisfied. 
\begin{enumerate}
\item
The \regvar\ condition ${\bf (RV_\alpha)}$  for some $\alpha>2$ and $\E G=0$.
\item
The anti-clustering condition $({\bf AC})_\alpha$:
\beao
\lim_{k\to\infty}\limsup_{\nto} \sup_{x\in \Lambda_n}
\delta_k^{-\alpha}\sum_{j= k}^n\P(|G_j|> x\delta_k\mid |G_0|> x\delta_k)=0\,.
\eeao
for a \seq\
$\delta_k=o(k^{-2})$, $\kto$, and sets $(\Lambda_n)$ \st\ $b_n=\inf
\Lambda_n\to \infty$ as $\nto$.
\item
The limit
$
b_+=\lim_{k\to\infty}(b_+(k+1)-b_+(k))
$ exists, where the constants $(b_+(k))$ are defined in Theorem~\ref{thm:clt}.
\item
The drift condition  ${\bf (DC_{\it p})}$ for every $p<\alpha$.
\end{enumerate}
Then the precise \ld\ principle
\beam\label{eq:unif}
\lim_{n\to \infty}\sup_{x\in \Lambda_n}\Big|\frac{\P(S_n(G)> x)}{n\,\P(|G|> x)}-b_+\Big|=0\,,
\eeam
holds if $\Lambda=(b_n,c_n)$ for any  \seq\ $(b_n)$
satisfying $b_n=n^{0.5+\vep}$ for  any $\vep>0$,
and $(c_n)$ \st\ $c_n>b_n$ and 
\beam\label{eq:tau}
\P(\tau_A>n)= o(n\, \P(|G|>c_n))\,,
\eeam
where $\tau_A= \tau_A(1)$ is the first hitting time of the atom $A$ of
the \MC ; see Section~\ref{subsec:drift}.
\ethe
We will apply this result to  $G_t=\theta'X_t$, $t\in\bbz$,
for any fixed $\theta\in\bbs^{d-1}$ with $b(\theta)\ne 0$.
Note that \eqref{eq:tau} is satisfied since  $\tau_A$ has exponential
moment. Condition ${\bf (RV_\alpha)}$ for $(G_t)$  
is satisfied by \regvar\ of $(X_t)$ in all non-degenerate cases 
$b(\theta) \ne 0$. {The
existence of the limits
$b_+=\lim_{k\to\infty}(b_+(k+1)-b_+(k))=
b(\theta)/\E|\theta '\Theta_0|^\alpha$ (here we assume that 
$\E|\theta '\Theta_0|^\alpha\ne 0$)} is ensured by Theorem \ref{lem:sum}. 
It remains to check condition $({\bf AC})_\alpha$ for $(G_t)$ under  ${\bf (DC_{\it p})}$ for $(G_t)$ for every $p<\alpha$. 
Note that
 ${\bf (DC_{\it p})}$ for $(X_t)$ implies  ${\bf (DC_{\it p})}$ for $(G_t)$. 
Using Markov's inequality of order
$p<\alpha$ and \eqref{rec:dcp}, we obtain the following bound for $k\ge 1$, $x\in \Lambda_n$:
\beao
\sum_{j= k}^n\P(|G_j|> x \delta_k \mid |G_0|> x\delta_k)&\le& 
\sum_{j= k}^{n}\frac{\E(|G_j|^p\1_{\{|G_0|>x \delta_k\}})}{x^p\delta_k^p
\P(|G_0|>x\delta_k)}\\&\le&\sum_{j= k}^{n}\Big(\frac{\beta^{j-1}\E(|X_0|^p\1_{\{|X_0|>x\delta_k\}})}{x^p\delta_k^p\P(|G_0|>x\delta_k)}+\frac{c}{x^p\delta_k^p}\Big)\\
&\le&c\Big(\frac{\beta^k\E(|X_0|^p\1_{\{|X_0|>x\delta_k\}})}{x^p\delta_k^p\P(|X_0|>x\delta_k)}+\frac{n}{x^p\delta_k^p}\Big).
\eeao
The second term is of the order $O(n b_n^{-p})=o(1)$ uniformly
for $x\in\Lambda_n$ since  $p$ can be chosen larger than 2 
such that $p(0.5+\vep)>1$. 
The first term converges to $c\beta^k$ as $\nto$ uniformly for 
$x\in \Lambda_n$, by applications of Karamata's Theorem and the
uniform \con\ theorem of \regvar .
We conclude that $({\bf AC})_\alpha$ holds as $\delta_k^{-1}\beta^k\to
0$ as $\kto$ if we choose  
$\delta_k=k^{-2-\vep'}$ for $\vep'>0$ sufficiently small.
Thus all conditions of Theorem~\ref {ldMW} are satisfied for
$(G_t)=(\theta'X_t)$ and therefore \eqref{eq:unif} applies. { Since 
$\P(|\theta'X|>x)/\P(|X|>x)\to \E[ |\theta'X|^\alpha]$ we can also
write \eqref{eq:unif} in the form \eqref{eq:rig}.
\par
Now choose $(\la_n)$ as in the formulation of the theorem and apply
Lemma~\ref{lem:ldp} below. This proves the theorem.
}

\appendix
\section{}\setcounter{equation}{0}
The following result is useful for proving multivariate \ld\ results
and \clt s.
\ble\label{lem:ldp}
Assume that  $(X_t)$ is an  $\bbr^d$-valued strictly stationary \seq\
which is \regvary\ with index $\alpha>0$ and satisfies the
one-dimensional \ld\ principle
\beam\label{eq:43}
\dfrac{\P(\theta'S_n>\la_n)}{n\,\P(|X|>\la_n)}\to b(\theta)
\,,\quad \theta\in \bbs^{d-1}\,,
\eeam
for some \seq\ $\la_n\to\infty$ \st\ $n\P(|X|>\la_n)\to 0$. 
Moreover, assume that $\alpha\not\in \bbn$ or $b(\cdot)=b(-\cdot)$.
Then \eqref{eq:lda} holds.
\ele
\begin{proof}
Define the \ms s 
\beao
m_n(\cdot)=\dfrac{\P(\la_n^{-1} S_n\in \cdot)}{n\,\P(|X|>\la_n)}\,,\quad n\ge 1\,,
\eeao
on the Borel $\sigma$-field of $\ov\bbr_0^d$.
We conclude from  \eqref{eq:43} that for any Borel set $B$ bounded 
away from zero,
\beao
\sup_{n\ge 1}m_n(B)<\infty\,.
\eeao 
This means that $(m_n(B))$ is vaguely tight; see 
\cite{kallenberg:1983,resnick:1987}. In view of  \eqref{eq:43}, any vague 
subsequential limit $\mu$ of $(m_n)$ satisfies the relation \eqref{eq:cb}.
For non-integer $\alpha$, the latter property 
combined with the proof of Theorem 1.1 in 
\cite{basrak:davis:mikosch:2002} shows that all vague subsequential limits
of  $(m_n)$ are identical and uniquely determined by the 
property \eqref{eq:cb}. Hence \eqref{eq:lda} holds and the limit $\nu_\alpha$ is
given by \eqref{eq:cb}. A careful study of the proof of Theorem 1.1 in 
\cite{basrak:davis:mikosch:2002} shows that the proof remains valid
if the  subsequential limits
have the property $\mu(\cdot)=\mu(-\cdot)$ which 
follows if $b(\cdot)=b(-\cdot)$. 
\end{proof}
\noindent
\subsubsection*{Acknowledgments} We would like to thank the referee for careful
reading of our paper and for useful commments.

\end{document}